\documentclass{amsart}
\usepackage{amsfonts,amssymb,amsmath,amsthm}
\usepackage{url}
\usepackage{enumerate}

\begin{document}

\urlstyle{sf}
\newtheorem{thm}{Theorem}[section]
\newtheorem{lemma}[thm]{Lemma}
\newtheorem{prop}[thm]{Proposition}
\newtheorem{cor}[thm]{Corollary}
\theoremstyle{definition}
\newtheorem{defn}[thm]{Definition}
\newtheorem{remark}[thm]{Remark}
\newtheorem{assumption}[thm]{Assumptions}
\numberwithin{equation}{section}

\author{Alan Thompson}
\address{Department of Mathematical and Statistical Sciences\\ 
University of Alberta\\ 
Edmonton, AB, Canada, T6G 2G1}
\email{amthomps@ualberta.ca}
\thanks{This work was partially supported by NSERC}

\keywords{Threefold, fibration, K3 surface}
\subjclass{Primary 14J30, Secondary 14D06, 14E30, 14J28}

\newcommand{\Z}{\mathbb{Z}}
\newcommand{\C}{\mathbb{C}}
\newcommand{\Q}{\mathbb{Q}}
\newcommand{\Proj}{\mathbb{P}}
\newcommand{\calO}{\mathcal{O}}
\newcommand{\calL}{\mathcal{L}}
\newcommand{\calE}{\mathcal{E}}
\newcommand{\calT}{\mathcal{T}}
\newcommand{\calR}{\mathcal{R}}
\newcommand{\calM}{\mathcal{M}}
\newcommand{\calN}{\mathcal{N}}
\newcommand{\calA}{\mathcal{A}}
\newcommand{\calP}{\mathcal{P}}
\newcommand{\Sym}{\mathrm{Sym}}

\newcounter{list2}

\title[Explicit models for threefolds fibred by K3 surfaces of degree two]{Explicit models for threefolds fibred by K3 surfaces of degree two}

\begin{abstract} We consider threefolds that admit a fibration by K3 surfaces over a nonsingular curve, equipped with a divisorial sheaf that defines a polarisation of degree two on the general fibre. Under certain assumptions on the threefold we show that its relative log canonical model exists and can be explicitly reconstructed from a small set of data determined by the original fibration. Finally we prove a converse to the above statement: under certain assumptions, any such set of data determines a threefold that arises as the relative log canonical model of a threefold admitting a fibration by K3 surfaces of degree two. \end{abstract}
\maketitle

\section{Introduction}\label{intro}

The aim of this paper is to produce an explicit construction for the relative log canonical model of a threefold that admits a fibration by K3 surfaces of degree two.

The explicit construction of threefolds is a problem that has attracted significant interest in recent years, fuelled largely by open questions about mirror symmetry and the classification of Calabi-Yau threefolds. Motivated by general classification theory for algebraic varieties, a common method by which such threefolds are constructed is by way of a K3 fibration. Most approaches to date involve embedding such fibrations into a toric ambient space \cite{sfk3f}\cite{cyvpk3s}\cite{k3fcy31}\cite{tcyhfwk3h}, as this provides a setting under which many properties of the constructed threefolds can be easily calculated.

In this paper we find an alternative method by which K3-fibred threefolds may be constructed. We begin by restricting our attention to threefolds that admit fibrations by K3 surfaces equipped with a polarisation of degree two; the standard example of such a K3 surface is a double cover of $\Proj^2$ ramified over a sextic curve. These K3 surfaces can be thought of as higher dimensional analogues of hyperelliptic curves of genus two, which can be seen as double covers of $\Proj^1$ ramified over six points. Using this analogy, in this paper we generalise to higher dimensions a construction of Catanese and Pignatelli \cite{flgi} which produces an explicit model for a surface admitting a fibration by hyperelliptic curves of genus two. This construction should provide a more general way to construct K3-fibred threefolds than the toric embedding method, but remains explicit enough that many properties of the constructed threefolds can still be easily calculated (see \cite[Chapter 5]{mythesis}).

More specifically, given a threefold $X$ admitting a fibration by K3 surfaces $\pi\colon X \to S$ over a smooth curve $S$, along with a divisorial (i.e. rank one reflexive) sheaf $\calL$ inducing a polarisation of degree two on the general fibre, our aim is to construct a birational model for $(X,\pi,\calL)$ over $S$. The model we choose to construct is known as the \emph{relative log canonical model}, a birational model arising from the minimal model programme (see \cite[Section 3.8]{bgav}) that has a good explicit description and is unique in its birational equivalence class.

The construction proceeds as follows: starting with a threefold fibred by K3 surfaces of degree two $(X,\pi,\calL)$ over a curve $S$ that satisfies certain assumptions (\ref{mainass}), we begin by finding a $5$-tuple of data $(\calE_1,\tau,\xi,\calE_3^+,\beta)$ on $S$ (see Definition \ref{5tupledefn}) that is determined by $(X,\pi,\calL)$. We then show that from this $5$-tuple it is possible to explicitly reconstruct the \emph{relative log canonical algebra} $\calR(X,\pi,\calL)$ of $(X,\pi,\calL)$ (defined in Section \ref{relsect}), from which the relative log canonical model of $(X,\pi,\calL)$ is easily computed as $\mathbf{Proj}_S \calR(X,\pi,\calL)$.

The following is the main result of this paper (Theorem \ref{relmodthm}), which categorises the output of the above construction. It shows that any threefold fibred by K3 surfaces of degree two $(X,\pi,\calL)$ satisfying Assumptions \ref{mainass} determines a $5$-tuple that is \emph{admissible} (see Definition \ref{admissdefn}), from which its relative log canonical model can be explicitly reconstructed and, furthermore, that given an admissible $5$-tuple we may always find a threefold fibred by K3 surfaces of degree two that determines that $5$-tuple. Therefore, our result gives a complete description of the threefolds that can arise as the relative log canonical models of threefolds fibred by K3 surfaces of degree two, in terms of their associated admissible $5$-tuples, along with an explicit method to construct them.

\begin{thm} Fix a nonsingular complex curve $S$. Let $(X,\pi,\calL)$ be a threefold fibred by K3 surfaces of degree two over $S$ that satisfies Assumptions \ref{mainass}. Then the associated $5$-tuple of $(X,\pi,\calL)$ over $S$ is admissible.

Conversely, let $\calR$ be a sheaf of $\calO_S$-algebras defined by an admissible $5$-tuple $(\calE_1,\tau,\xi,\calE_3^+,\beta)$. Let $X = \mathbf{Proj}_S(\calR)$ and $\pi\colon X \to S$ be the natural projection. Then there is a canonically defined polarisation sheaf $\calL$ on $X$ that makes $(X,\pi,\calL)$ into a threefold fibred by K3 surfaces of degree two that satisfies Assumptions \ref{mainass} and, furthermore, $\mathbf{Proj}_S(\calR)$ is the relative log canonical algebra of $(X,\pi,\calL)$ over $S$ and $(\calE_1,\tau,\xi,\calE_3^+,\beta)$ is its associated $5$-tuple.\end{thm}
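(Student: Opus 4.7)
The plan is to prove the two directions separately, drawing on the apparatus built up earlier in the paper. For the forward direction, I would take the 5-tuple $(\calE_1,\tau,\xi,\calE_3^+,\beta)$ already extracted from $(X,\pi,\calL)$ via Definition \ref{5tupledefn} (where $\calE_1 = \pi_*\calL$, $\tau$ and $\xi$ encode the multiplication map $\Sym^2\calE_1\to\pi_*\calL^2$ together with the $\Z/2$-splitting coming from the covering involution on the general fibre, $\calE_3^+$ is the invariant part of $\pi_*\calL^3$, and $\beta$ is the section that cuts out the sextic branch divisor) and verify the defining conditions of admissibility one at a time. Most of these conditions should be local on $S$ and follow from cohomology and base change on the K3 fibres, together with the standard description of a general degree-two polarised K3 as a double cover of $\Proj^2$ branched along a sextic, after invoking Assumptions \ref{mainass}.

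For the converse, the approach is to begin with an admissible 5-tuple $(\calE_1,\tau,\xi,\calE_3^+,\beta)$ and assemble the sheaf of graded $\calO_S$-algebras $\calR$ according to the prescription developed earlier: $\calR_1=\calE_1$, $\calR_2$ is the extension determined by $\tau$ and $\xi$, $\calR_3$ involves $\calE_3^+$, and $\beta$ provides the relation in higher degrees that realises the double-cover structure. Then set $X = \mathbf{Proj}_S(\calR)$, take $\pi$ to be the natural projection, and let $\calL = \calO_X(1)$ (up to a suitable twist by a sheaf on $S$). The decisive verification is on the general fibre: over a general point $s\in S$, the fibre of $\mathbf{Proj}_S(\calR)$ should reduce to the classical model of a degree-two polarised K3 realised as a hypersurface $y^2 = f_6(x_0,x_1,x_2)$ in the weighted projective space $\Proj(1,1,1,3)$, with $f_6$ extracted from $\beta$. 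From here one deduces flatness of $\pi$, the expected Hilbert polynomial, and the fact that $\calL$ induces a degree-two polarisation on the general fibre.

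The hard part will be the closing assertion that $\calR$ coincides with the relative log canonical algebra of the threefold just produced and that the 5-tuple extracted from $(X,\pi,\calL)$ recovers the original one. For this I would first show that $X$ has the class of singularities demanded by Assumptions \ref{mainass}, most plausibly relative log canonical singularities controlled by the admissibility conditions imposed on $\beta$, and then establish that the natural map $\calR \to \bigoplus_n \pi_*\calL^n$ is an isomorphism by a degree-by-degree comparison exploiting the $\Z/2$-splitting in each graded piece. The principal obstacle will be verifying Assumptions \ref{mainass} globally rather than merely at a general point: this requires a careful analysis of the discriminant locus where $\beta$ degenerates, in order to ensure that the singularities of $X$ appearing there remain within the log canonical regime and that no further modification is forced before one takes $\mathbf{Proj}_S$.
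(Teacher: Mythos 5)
Your outline follows the paper's architecture, but at the two places where the real work happens it substitutes a plausible-sounding heuristic for the actual argument, and in both cases the heuristic would not close the proof. In the forward direction, admissibility consists of two conditions: (i) the branch divisor $B_{\calA}$ misses the set $\calP$ of cone points, and (ii) $\mathbf{Proj}_S(\calR)$ has at worst \emph{canonical} singularities. Condition (i) does come out of the double-cover description (Proposition \ref{branchdiv}), but condition (ii) is emphatically not ``local on $S$'' and does not follow from cohomology and base change on the fibres: it is a statement about the total space of the relative log canonical model. The paper proves it by exploiting Assumption \ref{mainass}(ii): since $H$ is prime it cannot be contracted by the birational map $\hat{\phi}\colon X \dashrightarrow \hat{X}^c$, so discrepancies can only increase, giving $\mathrm{discrep}(\hat{X}^c,\hat{\phi}_+H)\geq\mathrm{discrep}(X,H)\geq 0$; one then drops the effective boundary $\hat{\phi}_+H$ to conclude $(\hat{X}^c,0)$ is canonical. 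Nothing in your sketch supplies this step, and note that the ``log canonical regime'' you invoke is strictly too weak --- the paper explicitly remarks that the theorem fails if one only assumes $(X,H)$ log canonical.

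In the converse direction you correctly identify the bottleneck (verifying Assumptions \ref{mainass} globally, not just at a general fibre) but the missing idea is precisely the one you would need. First, your local model $y^2=f_6(x_0,x_1,x_2)$ in $\Proj(1,1,1,3)$ describes only the hyperelliptic fibres; the correct local model is the complete intersection $\{f_2=0,\ z^2=f_6\}$ in $\Proj(1,1,1,2,3)\times U$, and the entire difficulty is concentrated at the unigonal fibres, i.e.\ over $\mathrm{Supp}(\tau)$, where the fibre of $\mathbf{Proj}_S(\calA)$ is a cone with vertex in $\calP$. To get Assumptions \ref{mainass}(i)--(ii) one takes $H$ a general member of $|\calL\otimes\pi^*\calN^m|$, observes that this system is base point free away from the points over $\calP$ (so Bertini plus the finiteness of non-cDV points handles everything else), and then does an explicit computation at a point of $\calP$: a general section of $\calO_{\mathbf{Proj}_S(\calA)}(1)$ there has an $A_{2r+1}$ rational double point, and since $B_{\calA}$ misses $\calP$ the double cover is ramified exactly at the vertex, so the general section of $\calO_X(1)$ acquires an $A_r$ rational double point, whence $(X,H)$ is canonical. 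Your proposal gestures at ``a careful analysis of the discriminant locus where $\beta$ degenerates,'' but the degeneration that matters is governed by $\tau$ (equivalently the extension class $\xi$), not by $\beta$; admissibility condition (i) guarantees $\beta$ is non-degenerate precisely where the cone points sit. Finally, the closing identification $\calR\cong\bigoplus_n\pi_*((\omega_X\otimes\calL)^{[n]})$ is not a delicate degree-by-degree comparison: with $\calL:=\calO_X(1)\otimes\omega_X^{-1}$ it is the one-line computation $\pi_*((\omega_X\otimes\calL)^{[n]})\cong\pi_*(\calO_X(1)^n)$. (Relatedly, $\calE_1$ is $\pi_*(\omega_X\otimes\calL)$, not $\pi_*\calL$.)
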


In order to prove this result we rely heavily on results of Catanese and Pignatelli \cite{flgi}, who perform an analogous construction for the relative canonical models of surfaces fibred by hyperelliptic curves of genus two. The closeness of the analogy between K3 surfaces of degree two and hyperelliptic curves of genus two means that several of the results in Sections \ref{relcanalg}, \ref{constructingR} and \ref{genres} of this paper correspond directly with results in \cite{flgi}, although many of the proofs have been substantially modified to work in the higher dimensional case. In particular, our main result (Theorem \ref{relmodthm}) should be seen as a higher-dimensional analogue of \cite[Theorem 4.13]{flgi}.

\proof[Notation] We briefly mention several pieces of notation that will be used throughout this paper. Firstly, let
$D$ be a Weil divisor on a normal variety $X$. Then $D$ determines a divisorial sheaf $\calO_X(D)$ on $X$. The $m$th reflexive power of $\calO_X(D)$ is defined to be $\calO_X(D)^{[m]} := (\calO_X(D)^{\otimes m})^{\vee\vee}$, where $\mbox{}^\vee$ denotes the dual sheaf, and agrees with the divisorial sheaf $\calO_X(mD)$. A more detailed discussion of the correspondence between Weil divisors and divisorial sheaves may be found in \cite[Appendix 1]{c3f}. 

Next let $f\colon X - \to Y$ be a birational map and let $D$ be a Weil divisor on $X$. Then we denote the exceptional set of $f$ by $\mathrm{Ex}(f)$, defined to be the set of points $x \in X$ such that $f^{-1}$ is not well-defined at $f(x)$. Finally, to avoid confusion with the direct image, we denote the strict transform of $D$ under $f$ by the nonstandard notation $f_+D$.

\section{The Relative Log Canonical Model of a Threefold Fibred by K3 Surfaces of Degree Two}\label{relsect}

The aim of this paper is to find an explicit method to construct the relative log canonical model of a threefold fibred by K3 surfaces of degree two. However, before doing this we should check that this model is always well-defined. Begin by fixing a nonsingular complex curve $S$, then define:

\begin{defn} A \emph{threefold fibred by K3 surfaces of degree two} over $S$ is a triple $(X,\pi,\calL)$ consisting of
\begin{itemize} 
\item A $\Q$-Gorenstein normal complex variety $X$ of dimension $3$,
\item A flat, projective, surjective morphism $\pi\colon X \to S$ with connected fibres, whose general fibre is a K3 surface with at worst Du Val singularities, and
\item A divisorial sheaf $\calL$ on $X$ with $\calL^{[m]}$ invertible for some $m > 0$, that induces a nef and big divisorial sheaf $\calL_s$ satisfying $\calL_s.\calL_s = 2$ on a general fibre $X_s$ of $\pi$.
\end{itemize}
\end{defn}

Given a threefold fibred by K3 surfaces of degree two $(X,\pi,\calL)$, the \emph{relative log canonical algebra} of $(X,\pi,\calL)$ is defined to be the $\calO_S$-algebra
\[ \calR(X,\pi,\calL) := \bigoplus_{n \geq 0} \pi_*((\omega_X \otimes \calL)^{[n]}).\]

Under the assumption that the relative log canonical algebra $\calR(X,\pi,\calL)$ is finitely generated as an $\calO_S$-algebra, the \emph{relative log canonical model} $X^c$ of $(X,\pi,\calL)$ over $S$, defined in \cite[Section 3.8]{bgav}, is well-defined and equal to
\[ X^c := \mathbf{Proj}_S \calR(X,\pi,\calL).\]
This model admits a natural morphism $\pi^c\colon X^c \to S$ and, furthermore, there is a birational map $\phi\colon X - \to X^c$ over $S$ satisfying $\mathrm{codim}\, \mathrm{Ex}(\phi^{-1}) \geq 2$.

The aim of this paper is to find an explicit method to construct the relative log canonical model of a threefold fibred by K3 surfaces of degree two. However, before we can do this, we must first make some assumptions on our threefold fibred by K3 surfaces of degree two.

\begin{assumption} \label{mainass} $(X,\pi,\calL)$ is a threefold fibred by K3 surfaces of degree two that satisfies the following assumptions:
\begin{enumerate}[(i)]
\item The divisorial sheaf $\calL$ is isomorphic to $\calO_X(H) \otimes \pi^*\calM$, where $H$ is a prime divisor on $X$ that is flat over $S$ and $\calM$ is an invertible sheaf on $S$;  
\item The log pair $(X,H)$ is canonical  (see \cite[Definition 2.34]{bgav});
\item The sheaf $\calL_s$ induced on a general fibre of $\pi\colon X \to S$ by $\calL$ is invertible and generated by its global sections; and
\item The localisation of the relative log canonical algebra $\calR(X,\pi,\calL)_s \otimes_{\calO_{S,s}} k(s)$ at any point $s \in S$ is isomorphic to one of
\begin{itemize}
\item (hyperelliptic case)
\[ \C[x_1,x_2,x_3,z]/(z^2 - f_6(x_i)),\]
where $\mathrm{deg}(x_i) = 1$ and $\mathrm{deg}(z) = 3$; or
\item (unigonal case)
\[ \C[x_1,x_2,x_3,y,z]/(z^2 - g_6(x_i,y),\ g_2(x_i)),\]
where $\mathrm{deg}(x_i)= 1$, $\mathrm{deg}(y)=2$, $\mathrm{deg}(z) = 3$ and $g_6(0,0,0,1) \neq 0$. 
\end{itemize}
\end{enumerate}
\end{assumption}

\subsection{Remarks on these Assumptions}

We use this subsection to briefly remark upon the reasons behind these assumptions and to discuss when they hold in certain special cases. However, the first thing that we should check is that, under the assumptions above, the relative log canonical model $X^c$ of $X$ is well-defined. This will follow if we can show that the relative log canonical algebra $\calR(X,\pi,\calL)$ is finitely generated as an $\calO_S$-algebra.

\begin{lemma} \label{fingen} Suppose that $(X,\pi,\calL)$ is a threefold fibred by K3 surfaces of degree two that satisfies Assumptions \ref{mainass}\textup{(}i\textup{)} and \ref{mainass}\textup{(}ii\textup{)}. Then the relative log canonical algebra $\calR(X,\pi,\calL)$ is finitely generated as an $\calO_S$-algebra. \end{lemma}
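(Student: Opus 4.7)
The plan is to reduce finite generation of $\calR(X,\pi,\calL)$ to the same question for the relative log canonical ring of the pair $(X,H)$, and then to appeal to standard results in the minimal model programme.

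First I strip off the pullback factor $\pi^*\calM$. Assumption~\ref{mainass}(i) together with invertibility of $\pi^*\calM$ gives, for each $n \geq 0$, an isomorphism of divisorial sheaves
$$(\omega_X \otimes \calL)^{[n]} \;\cong\; \calO_X(n(K_X+H)) \otimes \pi^*\calM^{\otimes n},$$
since the reflexive hull commutes with tensoring by an invertible sheaf. The projection formula then yields $\pi_*(\omega_X \otimes \calL)^{[n]} \cong \pi_*\calO_X(n(K_X+H)) \otimes \calM^{\otimes n}$. Hence $\calR(X,\pi,\calL)$ is obtained from the graded $\calO_S$-algebra $\calR' := \bigoplus_{n \geq 0} \pi_*\calO_X(n(K_X+H))$ by twisting the $n$-th piece by $\calM^{\otimes n}$. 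Because $\calM$ is invertible, the two algebras become isomorphic on any affine open of $S$ over which $\calM$ trivialises; since finite generation of a sheaf of graded $\calO_S$-algebras is local on $S$, it suffices to verify that $\calR'$ is finitely generated on each affine $U = \mathrm{Spec}\,A \subseteq S$, i.e.\ that the graded $A$-algebra $\bigoplus_n \Gamma(\pi^{-1}(U), \calO_X(n(K_X+H)))$ is finitely generated.

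On such $U$ the morphism $\pi$ is projective, the pair $(X,H)$ is canonical by Assumption~\ref{mainass}(ii), and $K_X + H$ is $\pi$-big: on a general K3 fibre $X_s$ one has $K_{X_s} = 0$, so $(K_X + H)|_{X_s}$ coincides with the nef and big class induced by $\calL$, which has positive self-intersection by the degree-two hypothesis. Finite generation of the relative log canonical ring of such a pair is then a standard MMP consequence.

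The main obstacle is identifying the correct MMP input: because $H$ appears with coefficient $1$, the pair $(X,H)$ is dlt rather than klt, so BCHM cannot be cited verbatim. One way around this is to pass to a small $\Q$-factorial dlt modification and perturb the coefficient of $H$ slightly downward to land in the klt case to which BCHM applies, then take a limit; alternatively one may invoke the extensions of the finite-generation theorem to (semi-)log-canonical pairs of log general type in the relative setting (for example in the work of Fujino, or of Hacon--Xu). Either of these routes delivers finite generation of $\calR'$, and hence of $\calR(X,\pi,\calL)$, without any further input from the K3-fibration structure.
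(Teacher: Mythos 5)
Your proposal follows essentially the same route as the paper: first untwist by $\pi^*\calM$ via the projection formula to reduce to the algebra $\bigoplus_n \pi_*\calO_X(n(K_X+H))$, then invoke finite generation from the log MMP for the canonical pair $(X,H)$. The paper resolves your ``dlt versus klt'' worry exactly as in your second alternative, by citing Fujino's finite generation result for log canonical pairs in the threefold setting (\cite[Theorem 3.14]{ilmmplcp}), so the coefficient-perturbation detour is unnecessary.
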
 

\begin{proof} We begin by noting that, as $\calM$ is invertible, we have isomorphisms 
\[(\omega_X \otimes \calL)^{[n]} \cong (\omega_X \otimes \calO_X(H))^{[n]} \otimes \pi^*\calM^n\]
for all $n \geq 0$. So, by the projection formula, there are isomorphisms 
\[\pi_*((\omega_X \otimes \calL)^{[n]}) \cong \pi_*((\omega_X \otimes \calO_X(H))^{[n]})\otimes \calM^n\] 
for all $n \geq 0$. Thus the $\calO_S$-algebra $\calR(X,\pi,\calL)$ is just the twist of the $\calO_S$-algebra $\calR(X,\pi,\calO_X(H))$ by the invertible sheaf $\calM$, and so $\calR(X,\pi,\calL)$ is finitely generated as an $\calO_S$-algebra if and only if $\calR(X,\pi,\calO_X(H))$ is (note that this also implies that the corresponding relative log canonical models $\mathbf{Proj}_S\calR(X,\pi,\calL)$ and $\mathbf{Proj}_S\calR(X,\pi,\calO_X(H))$ are isomorphic, a fact that will be useful later).

Finally, as the log pair $(X,H)$ is canonical, finite generation of $\calR(X,\pi,\calO_X(H))$ follows from results of the log minimal model program for threefolds \cite[Theorem 3.14]{ilmmplcp}.\end{proof}

\begin{remark} We note that both the proof of this lemma and the construction in this paper work just as well when Assumption \ref{mainass}(ii) is replaced by the weaker assumption ``the log pair $(X,H)$ is log canonical''. However, under this weaker assumption the proof of the final Theorem \ref{relmodthm}, that describes the output of our construction, fails to hold.\end{remark}

With this in place, we will briefly discuss Assumption \ref{mainass}(i). Na\"{i}vely, one might expect to define a polarisation on a threefold fibred by K3 surfaces of degree two simply by specifying a prime divisor $H$ on $X$ that is flat over $S$ and that induces a polarisation of the required type on a general fibre. Indeed, Assumption \ref{mainass}(i) implies that such a divisor always exists, in the form of the divisor $H$, and it follows from the proof of Lemma \ref{fingen} that the relative log canonical models of $(X,\pi,\calL)$ and $(X,\pi,\calO_X(H))$ are isomorphic over $S$. However, when we come to construct the relative log canonical model for a threefold fibred by K3 surfaces of degree two, we find that our construction produces both the model threefold and a polarisation sheaf on it (see Theorem \ref{relmodthm}), and that this polarisation sheaf does not necessarily admit a flat section. To account for this, Assumption \ref{mainass}(i) allows the polarisation to be twisted by the inverse image of a divisor on $S$.

Next, we prove a result that will allow Assumptions \ref{mainass}(i) and \ref{mainass}(ii) to be checked locally on $S$.

\begin{prop} \label{Lflat} Let $(X,\pi,\calL)$ be a threefold fibred by K3 surfaces of degree two. Suppose that for every closed point $s \in S$ there exists an affine open set $U_s \subset S$ containing $s$ and a reduced, irreducible divisor $H_{U_s}$ defined by a section in $H^0(\pi^{-1}(U_s),\calL)$, such that $H_{U_s}$ is flat over $U_s$ and the log pair $(\pi^{-1}(U_s), H_{U_s})$ is canonical. Then Assumptions \ref{mainass}\textup{(}i\textup{)} and \ref{mainass}\textup{(}ii\textup{)} hold for $(X,\pi,\calL)$. \end{prop}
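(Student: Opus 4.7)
The plan is to construct $H$ as the divisor of zeroes of a sufficiently general global section of a suitable twist $\calL \otimes \pi^*\calM^{-1}$ of $\calL$, and then verify the three required properties. Specifically, I would choose an invertible sheaf $\calM$ on $S$ so that $\calL \otimes \pi^*\calM^{-1}$ has enough global sections, take $\tilde\sigma \in H^0(X, \calL \otimes \pi^*\calM^{-1})$ general, and set $H := (\tilde\sigma)$. The identification $\calL \cong \calO_X(H) \otimes \pi^*\calM$ is then immediate, and it remains only to verify that $H$ is prime, flat over $S$, and that $(X,H)$ is canonical.

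For the choice of $\calM$: since $S$ is a smooth curve, quasi-compactness lets me reduce the hypothesis to a finite subcover $\{U_{s_i}\}_{i=1}^k$. I would then pick $\calM^{-1}$ of sufficiently large degree so that Serre-type vanishing on $S$ gives surjectivity of the restriction map
\[ H^0(X, \calL \otimes \pi^*\calM^{-1}) \longrightarrow H^0(\pi^{-1}(U_{s_i}), \calL|_{\pi^{-1}(U_{s_i})}) \]
for each $i$, after trivialising $\calM|_{U_{s_i}}$. With such $\calM$ in hand, a general global section restricts to a general local section on each $\pi^{-1}(U_{s_i})$.

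The three required properties of $H$ are then verified as follows. Primality of $H$ follows from a Bertini-type argument, using that the linear system $|\calL_t|$ on a general K3 fibre is basepoint-free (giving the usual degree-$2$ map to $\Proj^2$), so a general global section gives an irreducible reduced divisor. Flatness of $H$ over $S$ follows from the local hypothesis: each local section $\sigma_s$ vanishes on no entire fibre over $U_s$, so by surjectivity of restriction the same holds for a general global $\tilde\sigma$, giving a divisor without fibre components. Canonicity of $(X,H)$ follows from openness of the canonical condition within linear systems of log pairs on $\Q$-Gorenstein varieties: each local linear system $|\calL|_{\pi^{-1}(U_{s_i})}|$ contains a canonical member $H_{U_{s_i}}$, so the canonical locus in it is a nonempty dense Zariski open subset; surjectivity of restriction ensures that a general $\tilde\sigma$ lands in each of these open subsets simultaneously; and since canonicity is a local condition on $X$, with the $\pi^{-1}(U_{s_i})$ covering $X$, this yields $(X,H)$ canonical globally. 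The main obstacle is securing the surjectivity of all local restriction maps from a single global choice of $\calM$; this is handled by the fact that $S$ is a smooth curve, where higher cohomology of $\pi_*\calL \otimes \calM^{-1}$ twisted by ideal sheaves of finite subschemes vanishes once $\deg \calM^{-1}$ is large enough.
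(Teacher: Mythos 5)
Your overall strategy is the same as the paper's (take a general divisor in $|\calL\otimes\pi^*\calN^m|$ for a suitably positive twist, then run Bertini-type arguments and use that the canonical condition is local), but three of your justifications do not hold as written. First, the restriction map $H^0(X,\calL\otimes\pi^*\calM^{-1})\to H^0(\pi^{-1}(U_{s_i}),\calL)$ is never surjective: the source is finite-dimensional over $\C$, while the target is a module over the coordinate ring of the affine curve $U_{s_i}$ and hence infinite-dimensional. What your Serre-vanishing argument actually produces --- and what the paper uses --- is global generation of $\pi_*\calL\otimes\calN^m$, i.e.\ that the global sections generate $H^0(\pi^{-1}(U_{s_i}),\calL\otimes\pi^*\calN^m)$ as a module over $\Gamma(U_{s_i},\calO_S)$; every step you derive from ``surjectivity of restriction'' must be rephrased in those terms. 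Second, your primality argument assumes $|\calL_t|$ is basepoint-free on a general fibre. That is Assumption (iii), which is not a hypothesis of this proposition: the definition of the polarisation only requires $\calL_t$ nef and big, and in Mayer's unigonal case every member of $|\calL_t|$ contains the fixed $(-2)$-curve and is reducible. (Basepoint-freeness can in fact be deduced from the irreducibility of $H_{U_s}$ --- a fixed component of $|\calL_t|$ would sweep out a fixed surface that $H_{U_s}$ must equal, contradicting $\calL_t\cdot F_t=0\neq 2$ --- but you would have to supply that argument. The paper avoids the issue entirely by applying Bertini to the local linear system, which contains the reduced irreducible member $H_{U_s}$ by hypothesis.)

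The third and most substantive gap is the vertical components of $(\tilde\sigma)$. The hypothesis, combined with global generation, shows that for each component $V$ of a reducible or non-reduced fibre the global sections vanishing on $V$ form a proper linear subspace, and there are only finitely many such $V$, so a general section avoids them. But entire reduced irreducible fibres are a different problem: there are infinitely many of them, and a union of infinitely many proper subspaces can cover the whole space, so ``the same holds for a general global $\tilde\sigma$'' is not automatic. The paper resolves this by decomposing $D=D^h+D^v$, observing that the surviving $D^v$ is a sum of whole fibres and hence equals $\pi^*E$ for an effective divisor $E$ on $S$, and then setting $\calM=\calN^{-m}\otimes\calO_S(E)$ and $H=D^h$ --- this is precisely why Assumption (i) permits a twist by $\pi^*\calM$ in the first place. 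Your setup, which fixes $\calM$ before choosing the section, cannot absorb $D^v$ this way; you would instead need a codimension count (the sections vanishing on a whole fibre $X_s$ form a subspace of codimension equal to the rank of $\pi_*\calL$, which is $3$, so their union over the one-dimensional base is a proper closed subset). Either repair is routine, but one of them must be made explicit.
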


\begin{proof} Note that in order to prove the proposition, it suffices to show that we may find an invertible sheaf $\calM$ on $S$ such that $\calL \otimes \pi^*\calM^{-1} \cong \calO_{X}({H})$ for some prime divisor ${H}$ that is flat over $S$ and makes $(X,H)$ canonical.

To construct $\calM$, we begin by choosing some ample invertible sheaf $\calN$ on $S$. By the ampleness property, we may find an integer $m > 0$ such that $\pi_*{\calL} \otimes \calN^m$ is generated by its global sections. Furthermore, by the projection formula and the Leray spectral sequence, we have an isomorphism
\begin{equation} H^0(X, {\calL} \otimes {\pi}^*\calN^m) \cong H^0(S, {\pi}_*{\calL} \otimes \calN^m). \label{H0iso} \end{equation}
In particular, the space of sections $H^0(X, {\calL} \otimes {\pi}^*\calN^m)$ is nonempty. Let $D$ be an effective divisor defined by a general section in this space.

Now, we say that an irreducible component $D_i$ of an effective divisor $D$ is \emph{horizontal} if ${\pi}(D_i) = S$ and \emph{vertical} if ${\pi}(D_i)$ is a closed point in $S$. Let $D^h$ denote the sum of the horizontal components of $D$ and $D^v$ denote the sum of the vertical components. As ${\pi}$ is proper, the image of any irreducible component must be closed and connected, so any irreducible component of $D$ is either horizontal or vertical and $D = D^h + D^v$. Furthermore, $D^h$ and $D^v$ must be effective because $D$ is. 

Now let $s \in S$ be a point over which the fibre $X_s$ is reducible or non-reduced and let $V$ be any prime divisor in $\mathrm{Supp}(X_s)$. By assumption, there exists an affine neighbourhood $U_s$ of $s$ and a section in $H^0({\pi}^{-1}(U_s),{\calL}) \cong H^0({\pi}^{-1}(U_s),\calL \otimes {\pi}^*\calN^m)$ that does not vanish on $V$. So, since ${\pi}_*{\calL} \otimes \calN^m$ is generated by its global sections, using the isomorphism \eqref{H0iso} we find that there exists a global section of ${\calL} \otimes {\pi}^*\calN^m$ that does not vanish on $V$. Therefore, the natural injection
\[H^0(X, {\calL}(-V) \otimes {\pi}^*\calN^m) \longrightarrow H^0(X, {\calL} \otimes {\pi}^*\calN^m)\]
cannot be surjective, so its image is Zariski closed in $H^0(X, {\calL} \otimes {\pi}^*\calN^m)$ and $V$ does not appear in $D$. Repeating this argument for the (finitely many) other components of reducible or non-reduced fibres, we see that no components of such fibres appear in $D$.

Therefore, only components of reduced, irreducible fibres may appear in $D^v$. So $D^v$ must be a sum of fibres and as such can be written as the inverse image of an effective divisor $E$ on $S$. We have
\[\calO_X(D^h) \cong {\calL} \otimes {\pi}^*(\calN^m \otimes \calO_{S}(-E)).\]
Let $\calM = \calN^{-m} \otimes \calO_{S}(E)$ and $H = D^h$. In order to complete the proof of Proposition \ref{Lflat} we just need to show that $H$ is reduced, irreducible and flat over $S$, and $(X,H)$ is canonical.

We begin with flatness. Let $H_i$ denote a prime divisor in $\mathrm{Supp}(H)$. To show that $H_i$ is flat over $S$ (as a divisor), it suffices to show that $H_i$ is flat when considered as a scheme over $S$. As $S$ is a nonsingular curve, by \cite[Proposition III.9.7]{hart} this will follow if we can show that any associated point of $H_i$ maps to the generic point of $S$. But $H_i$ is reduced and irreducible, so its only associated point is the generic point, which maps to the generic point of $S$ as ${\pi}|_{H_i}$ is surjective. Thus every component $H_i$ of $H$ is flat over $S$, so $H$ must be also. 

Finally, we have to show that $H$ is reduced and irreducible and that $(X,H)$ is canonical. Pick a finite subcover $\mathcal{U}$ of $\{U_s|s \in S\}$. Then, since $\pi_*\calO_X(D)$ is generated by its global sections, using the isomorphism \eqref{H0iso} we see that $D$ may be chosen so that $H|_{\pi^{-1}(U_s)}$ is a general member of the linear system $|H_{U_s}|$ for each $U_s \in \mathcal{U}$. But such a member is reduced and irreducible by Bertini's theorem and the pair $({\pi^{-1}(U_s)},H|_{\pi^{-1}(U_s)})$ is canonical by \cite[Corollary 2.33]{bgav}. So $H$ must be reduced and irreducible and, as the canonical property can be checked locally, the pair $(X,H)$ is canonical. \end{proof}

We will devote the remainder of this subsection to a discussion of Assumptions \ref{mainass}(iii) and \ref{mainass}(iv), beginning with a lemma that will, in certain cases, allow us to check Assumption \ref{mainass}(iv) as a statement on the cohomology of the fibres.

\begin{lemma} \label{locallemma} Suppose that $(X,\pi,\calL)$ is a threefold fibred by K3 surfaces of degree two that satisfies Assumptions \ref{mainass}\textup{(}i\textup{)} and \ref{mainass}\textup{(}ii\textup{)}. Let $s \in S$ be any point and let $X_s$ denote the fibre of $\pi\colon X \to S$ over $s$. If $\calL$ and $(\omega_X\otimes\calL)$ are $\pi$-nef in a neighbourhood of $X_s$, then there is an isomorphism
\[ \calR(X,\pi,L)_s \otimes_{\calO_{S,s}} k(s) \cong \bigoplus_{n \geq 0} H^0(X_s, (\omega_X \otimes \calL)^{[n]}).\]
\end{lemma}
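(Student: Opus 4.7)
The plan is to derive, for each $n \geq 0$, a base-change isomorphism $\pi_*((\omega_X \otimes \calL)^{[n]})_s \otimes_{\calO_{S,s}} k(s) \cong H^0(X_s, (\omega_X \otimes \calL)^{[n]}|_{X_s})$ and then sum over $n$ to recover the stated graded identification. The $n=0$ case is immediate from $\pi_*\calO_X = \calO_S$ and the connectedness of the fibres, so the real content lies in $n \geq 1$.

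For each $n \geq 1$, the base-change isomorphism will follow from Grauert's theorem once I establish the vanishing $R^1\pi_*((\omega_X \otimes \calL)^{[n]}) = 0$ in an open neighbourhood of $s$. Using Assumption \ref{mainass}(i) and the projection formula, this reduces to showing $R^1\pi_*\calO_X(n(K_X+H)) = 0$ near $s$. For $n \geq 2$ I would split $n(K_X+H) = (K_X+H) + (n-1)(K_X+H)$; the hypothesis that $\omega_X \otimes \calL$ is $\pi$-nef near $X_s$ (combined with $\pi^*\calM$ being $\pi$-trivial) makes $K_X + H$ $\pi$-nef there, while on a general fibre $(K_X+H)|_{X_s} = \calL_s$ has self-intersection $2$ and so is big. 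Hence $(n-1)(K_X+H)$ is $\pi$-nef and $\pi$-big in a neighbourhood of $X_s$, and a relative Kawamata-Viehweg vanishing for the canonical pair $(X,H)$ supplies the vanishing. For $n = 1$ the same conclusion follows from KV applied to the pair $(X,0)$ with twist $H$: the canonicality of $(X,H)$ with $H$ effective forces $X$ itself to have canonical (hence klt) singularities, and the hypothesis that $\calL$ is $\pi$-nef gives that $H$ is $\pi$-nef and $\pi$-big near $X_s$.

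The main technical obstacle will be reconciling the divisorial-but-not-invertible nature of the sheaves $(\omega_X \otimes \calL)^{[n]}$ with the standard formulations of cohomology-and-base-change and of relative vanishing, both of which are typically phrased for line bundles. I would address this by working on the Zariski open $U \subset X$ where $\omega_X$ and $\calL$ are both invertible (its complement has codimension $\geq 2$ in $X$, so is invisible to $\pi_*$ and to the restriction to a general fibre), or by passing to a suitable index-one cover on which the relevant reflexive powers become line bundles, and then descending both the vanishing and the base-change isomorphism to the original setting before summing over $n$.
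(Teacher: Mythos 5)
Your proposal follows essentially the same route as the paper: reduce to a fibrewise base-change isomorphism, obtain it from the vanishing of the higher direct images, and get that vanishing from relative Kawamata--Viehweg using that $\calL$ is $\pi$-nef with self-intersection two on fibres (hence $\pi$-big near $X_s$) and that $\omega_X\otimes\calL$ is $\pi$-nef. Three remarks on where you diverge. First, your case split is unnecessary and your $n\geq 2$ branch is the one slightly delicate step: you invoke relative KV ``for the canonical pair $(X,H)$'', but $(X,H)$ has a boundary component of coefficient one and so is not klt, which is what the standard relative vanishing theorem requires; this is repairable (perturb to $(1-\epsilon)H$, or use an lc version), but the paper avoids it entirely by applying KV to the pair $(X,0)$ --- which is klt since $(X,H)$ canonical with $H$ effective forces $(X,0)$ canonical, exactly as in your $n=1$ argument --- for \emph{all} $n\geq 1$ at once, via the decomposition $n(K_X+L)-K_X=(n-1)(K_X+L)+L$ with $L$ the divisor class of $\calL$, a sum of a $\pi$-nef and a $\pi$-nef-and-big divisor. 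Second, you should ask for the vanishing of $R^i\pi_*$ for all $i>0$ (the fibres are surfaces, so killing $R^2$ as well is needed to run cohomology and base change down to degree zero); the same vanishing theorem gives this for free. Third, the reflexivity issue you flag is real, but the paper disposes of it simply by citing the extension of the relative vanishing theorem to Weil divisorial sheaves (Kawamata--Matsuda--Matsuki, Remark 1.2.6), rather than by restricting to the locus where the sheaves are invertible or passing to index-one covers.
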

\begin{proof} It suffices to prove that the natural maps 
\[\pi_*((\omega_X \otimes \calL)^{[n]})_s \otimes_{\calO_{S,s}} k(s) \longrightarrow H^0(X_s, (\omega_X \otimes \calL)^{[n]})\] 
are isomorphisms for all $n > 0$. This will follow from the theorem on cohomology and base change if we can show that the higher direct images $R^i\pi_*\left((\omega_X \otimes \calL)^{[n]}\right)$ vanish in a neighbourhood of $s$ for all $i > 0$ and all $n > 0$. 

In order to show this note first that, since $H$ is effective and $(X,H)$ is canonical, by \cite[Corollary 2.35]{bgav} we have that $(X,0)$ is also canonical. Furthermore, as $\calL$ is $\pi$-nef in a neighbourhood of $X_s$ and has self-intersection number two on any fibre of $\pi$, it is also $\pi$-big in a neighbourhood of $X_s$. Using this, the vanishing of the higher direct images follows immediately by applying \cite[Theorem 1.2.5]{immp} and \cite[Remark 1.2.6]{immp} to the pair $(X,0)$ and the divisorial sheaves $(\omega_X \otimes \calL)^{[n]}$.
\end{proof}

We use this lemma to prove the next proposition, which serves to motivate Assumption \ref{mainass}(iv) by showing that it holds for a smooth generic fibre in a threefold fibred by K3 surfaces of degree two.

\begin{prop} \label{smoothprop} Let $(X,\pi,\calL)$ be a threefold fibred by K3 surfaces of degree two that satisfies Assumptions \ref{mainass}\textup{(}i\textup{)} and \ref{mainass}\textup{(}ii\textup{)}. Suppose that the generic fibre of $\pi\colon X \to S$ is smooth. Then Assumption \ref{mainass}\textup{(}iv\textup{)} holds for a general $s \in S$.\end{prop}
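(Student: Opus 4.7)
The plan is to apply Lemma \ref{locallemma} to identify the fibre of $\calR(X,\pi,\calL)$ at $s$ with the graded ring of global sections of $(\omega_X\otimes\calL)|_{X_s}$, and then invoke the classical theory of projective models of polarised K3 surfaces of degree two to identify this ring with one of the two prescribed forms.

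For a general $s\in S$, I would first choose a Zariski open neighbourhood $U\subset S$ of $s$ over which $\pi$ is smooth; this is possible because the generic fibre is smooth. For each $t\in U$, relative adjunction together with the triviality of $\omega_{X_t}$ for a smooth K3 surface yields $(\omega_X\otimes\calL)|_{X_t}\cong\calL_t$. After shrinking $U$ if necessary, we may assume that $\calL_t$ is a nef and big polarisation of degree two on every fibre over $U$, so that both $\calL$ and $\omega_X\otimes\calL$ are $\pi$-nef in a neighbourhood of $X_s$. Lemma \ref{locallemma} then produces the isomorphism
\[\calR(X,\pi,\calL)_s\otimes_{\calO_{S,s}}k(s)\cong\bigoplus_{n\geq 0}H^0(X_s,\calL_s^{\otimes n}).\]

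Next, I would appeal to the classical classification of projective models of polarised K3 surfaces of degree two (Saint-Donat, Mayer). Riemann--Roch and Kodaira vanishing on the smooth K3 surface $X_s$ give $h^0(\calL_s^{\otimes n})=n^2+2$ for $n\geq 1$, and the classification splits according to whether $|\calL_s|$ has a base point. If $|\calL_s|$ is base-point-free, the linear system presents $X_s$ as a double cover $\phi\colon X_s\to\Proj^2$ branched over a sextic; computing $\phi_*\calO_{X_s}\cong\calO_{\Proj^2}\oplus\calO_{\Proj^2}(-3)$ and assembling the sections yields the hyperelliptic form $\C[x_1,x_2,x_3,z]/(z^2-f_6(x_i))$. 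If $|\calL_s|$ has a base point, one may write $\calL_s\sim 2E+R$ for an elliptic pencil $|E|$ and a $(-2)$-curve $R$ with $E\cdot R=1$; unravelling the graded ring then produces the unigonal form, with the extra degree-two generator $y$ coming from $H^0(\calL_s^{\otimes 2})\setminus\Sym^2 H^0(\calL_s)$ and the degree-two relation $g_2(x_i)=0$ reflecting the failure of $|\calL_s|$ to separate the base locus.

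The nefness verification and the application of Lemma \ref{locallemma} should be routine. The main obstacle is the careful identification of the graded ring in the unigonal case: one must account for the degree-two generator $y$, the degree-two relation $g_2(x_i)=0$, and the non-vanishing $g_6(0,0,0,1)\neq 0$ (which encodes that $y$ enters the degree-six relation non-trivially so that $z^2$ can be expressed in terms of the other generators modulo $g_2$). This is a matter of unpacking the Saint-Donat classification, which while standard requires more bookkeeping than the hyperelliptic case.
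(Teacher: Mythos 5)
Your proposal is correct and follows essentially the same route as the paper: reduce via Lemma \ref{locallemma} and adjunction to the section ring $\bigoplus_n H^0(X_s,\calL_s^n)$ on a smooth K3 surface of degree two, then invoke the Mayer--Saint-Donat dichotomy ($|\calL_s|$ base point free versus $|\calL_s|=|2E|+R$) together with Riemann--Roch to identify the hyperelliptic and unigonal forms. The only difference is cosmetic: you spell out the nefness verification and the unigonal bookkeeping in slightly more detail than the paper, which simply cites \cite{fk3s} and leaves the ring computation to the reader.
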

 
\begin{proof} Let $s \in S$ be a general point and let $X_s$ denote the fibre over $s$. Then, by definition, $\calL$ is $\pi$-nef in a neighbourhood of $X_s$, so we may apply Lemma \ref{locallemma} and adjunction to $X_s$ to obtain that 
\[\calR(X,\pi,\calL)_s \otimes_{\calO_{S,s}} k(s) \cong \bigoplus_{n \geq 0} H^0(X_s, \calL_s^n),\]
where $\calL_s$ denotes the invertible sheaf induced on $X_s$ by $\calL$. 

By assumption, we see that $X_s$ is a smooth K3 surface of degree two with polarisation $\calL_s$. Let $H_s$ be a divisor on $X_s$ defined by a section of $\calL_s$. \cite[Proposition 8]{fk3s} shows that $|H_s|$ is either base point free or has the form $|H_s| = |2E| +F$, where $E$ is a smooth elliptic curve and $F$ is a fixed rational $(-2)$-curve. In either case, the proof of \cite[Corollary 5]{fk3s} shows that $|2H_s|$ is base point free and the morphism to projective space defined by $|3H_s|$ is birational onto its image. Using this, the algebra $\bigoplus_{n \geq 0} H^0(X_s, \calL_s^n)$ is easily calculated using the Riemann-Roch theorem, to give the hyperelliptic case of Assumption \ref{mainass}(iii) when $|H_s|$ is base point free and the unigonal case of Assumption \ref{mainass}(iii) when $|H_s|$ has base points. \end{proof}

Our next result shows that, on a general fibre of a threefold fibred by K3 surfaces of degree two, Assumption \ref{mainass}(iii) implies Assumption \ref{mainass}(iv). In fact, we see that more than this is true: Assumption \ref{mainass}(iii) implies that the \emph{hyperelliptic case} of Assumption \ref{mainass}(iv) holds on a general fibre. This assumption is crucial to our construction: whilst we expect that a related construction should exist for threefolds fibred by K3 surfaces of degree two with unigonal general fibre, this is a subject for another paper.

\begin{prop} \label{hyperprop} Suppose that $(X,\pi,\calL)$ is a threefold fibred by K3 surfaces of degree two that satisfies Assumptions \ref{mainass}\textup{(}i\textup{)}, \ref{mainass}\textup{(}ii\textup{)} and \ref{mainass}\textup{(}iii\textup{)}. Then the hyperelliptic case of Assumption \ref{mainass}\textup{(}iv\textup{)} holds at a general point $s \in S$.
\end{prop}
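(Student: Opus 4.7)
The plan is to reduce the computation of the localised relative log canonical algebra at a general point to a section ring computation on a K3 surface, and then appeal to Saint-Donat's classification as in the proof of Proposition \ref{smoothprop}.

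First, I would fix a general $s\in S$. By the definition of a threefold fibred by K3 surfaces of degree two, the fibre $X_s$ is a K3 surface with at worst Du Val singularities; by Assumption \ref{mainass}(iii), the sheaf $\calL_s$ is invertible and globally generated, so in particular nef, and hence $\calL$ is $\pi$-nef in a neighbourhood of $X_s$. The hypotheses of Lemma \ref{locallemma} are therefore satisfied. Shrinking around $s$, we may further assume that $\omega_X$ and $\calL$ are both invertible in a neighbourhood of $X_s$, so that the usual adjunction applies on the Cartier divisor $X_s \subset X$. Since $X_s$ is a K3 surface with Du Val singularities its dualising sheaf is trivial, giving $(\omega_X \otimes \calL)^{[n]}|_{X_s} \cong \calL_s^n$ and hence
\[\calR(X,\pi,\calL)_s \otimes_{\calO_{S,s}} k(s) \;\cong\; \bigoplus_{n \geq 0} H^0(X_s,\calL_s^n).\]

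Next, I would pass to the minimal resolution $\rho\colon \tilde X_s \to X_s$. Then $\tilde X_s$ is a smooth K3 surface and $\tilde\calL_s := \rho^*\calL_s$ is an invertible, nef, big and base point free sheaf with $\tilde\calL_s\cdot\tilde\calL_s = 2$; moreover, since Du Val singularities are rational, $H^0(X_s,\calL_s^n) = H^0(\tilde X_s,\tilde\calL_s^n)$ for all $n\geq 0$. This reduces the problem to the smooth K3 setting of Proposition \ref{smoothprop}.

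Finally, I would invoke the arguments of that proposition verbatim: since $|\tilde H_s|$ is base point free, the alternative $|\tilde H_s|=|2E|+F$ of \cite[Proposition 8]{fk3s} is excluded, so $|\tilde H_s|$ defines a $2{:}1$ morphism to $\Proj^2$ branched along a sextic, and \cite[Corollary 5]{fk3s} shows $|2\tilde H_s|$ is base point free while $|3\tilde H_s|$ is birational onto its image. A Riemann--Roch computation then identifies the section ring with $\C[x_1,x_2,x_3,z]/(z^2 - f_6(x_i))$ in degrees $(1,1,1,3)$, which is precisely the hyperelliptic case of Assumption \ref{mainass}(iv).

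The main delicate point is the adjunction step in the first paragraph, which must produce $\calL_s^n$ from the divisorial sheaf $(\omega_X\otimes\calL)^{[n]}$ when $X$ is only $\Q$-Gorenstein and $X_s$ may be singular. Restricting to a general $s$, where invertibility of both $\omega_X$ and $\calL$ in a neighbourhood of $X_s$ can be arranged and $\omega_{X_s} \cong \calO_{X_s}$ on a Du Val K3, is what makes this argument work; the rest is a clean application of standard results.
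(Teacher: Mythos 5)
Your proposal is correct and follows essentially the same route as the paper: reduce to the section ring $\bigoplus_n H^0(X_s,\calL_s^n)$ via Lemma \ref{locallemma} and adjunction, pass to the minimal resolution using rationality of Du Val singularities, and observe that base point freeness (Assumption \ref{mainass}(iii)) rules out the unigonal alternative of \cite[Proposition 8]{fk3s}, so only the hyperelliptic case can occur.
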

\begin{proof} Let $s \in S$ be a general point and let $X_s$ denote the fibre over $s$. Then, exactly as in the proof of Proposition \ref{smoothprop}, we see that it is enough to study the algebra $\bigoplus_{n \geq 0} H^0(X_s, \calL_s^n)$, where $\calL_s$ denotes the invertible sheaf induced on $X_s$ by $\calL$.  

By definition, $X_s$ is a K3 surface of degree two with polarisation $\calL_s$, that may have Du Val singularities. Let $f\colon Y \to X_s$ be a minimal resolution of $X_s$, where $Y$ is a smooth K3 surface. The inverse image $f^*\calL_s$ is an invertible sheaf that defines a polarisation of degree two on $Y$ and $H^0(Y, f^*\calL_s^n)\cong H^0(X_s, \calL_s^n)$ for all $n\geq0$. Given this, we argue in the same way as in the proof of Proposition \ref{smoothprop} on $Y$, whilst noting that Assumption \ref{mainass}(iii) implies that the linear system defined on $Y$ by $f^*\calL_s$ is base point free, so only the hyperelliptic case may occur.\end{proof}

The last result of this subsection is the strongest. It shows that if $X$ is smooth and $\pi\colon X \to S$ is semistable, then Assumption \ref{mainass}(iv) holds at \emph{every} point $s \in S$. 

\begin{thm} Suppose that $(X,\pi,\calL)$ is a threefold fibred by K3 surfaces of degree two satisfying Assumptions \ref{mainass}\textup{(}i\textup{)} and \ref{mainass}\textup{(}ii\textup{)}, and suppose further that $X$ is smooth and $\pi\colon X \to S$ is semistable \textup{(}i.e. all fibres of $\pi$ are reduced and have simple normal crossings\textup{)}. Then Assumption \ref{mainass}\textup{(}iv\textup{)} holds for $(X,\pi,\calL)$.
\end{thm}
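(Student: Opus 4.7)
The plan is to apply Lemma \ref{locallemma} at every $s \in S$ and then compute the resulting graded algebra of sections directly on each fibre $X_s$, thereby extending the analysis of Propositions \ref{smoothprop} and \ref{hyperprop} from the generic fibre to an arbitrary one. The new ingredient beyond Proposition \ref{hyperprop} is the semistable structure, which gives enough control over the geometry of every fibre to perform the computation.

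I would first verify the hypotheses of Lemma \ref{locallemma} at an arbitrary $s \in S$. Because $X$ is smooth, $S$ is a smooth curve, and $\pi$ is flat, the relative dualising sheaf $\omega_{X/S} = \omega_X \otimes \pi^*\omega_S^{-1}$ is an invertible sheaf, and its restriction to the generic fibre (a smooth K3 surface, by generic smoothness combined with the K3 hypothesis) is trivial. A cohomology and base change argument, using the semistable hypothesis to ensure flatness of the relevant direct images, then shows that $\pi_*\omega_{X/S}$ is invertible on $S$ and that the natural evaluation morphism $\pi^*\pi_*\omega_{X/S} \to \omega_{X/S}$ is an isomorphism. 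Consequently $\omega_{X/S} \cong \pi^*\calN$ for some invertible sheaf $\calN$ on $S$, and so $\omega_X|_{X_s} \cong \omega_{X_s} \cong \calO_{X_s}$ for every $s$. In particular $(\omega_X \otimes \calL)|_{X_s} \cong \calL|_{X_s}$, reducing the $\pi$-nefness of $\omega_X \otimes \calL$ near $X_s$ to that of $\calL$; this last is handled using $\calL = \calO_X(H) \otimes \pi^*\calM$, the flatness of $H$ over $S$ (which ensures no component of $X_s$ lies in $H$), and the canonical property of $(X,H)$ to control intersection numbers $H \cdot C$ for $C \subset X_s$.

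With Lemma \ref{locallemma} available, the localised algebra becomes $\bigoplus_{n \geq 0} H^0(X_s, \calL_s^n)$, where $\calL_s := \calL|_{X_s}$. By the previous step, $X_s$ is a reduced simple normal crossing surface with trivial dualising sheaf, so Kulikov's classification of semistable K3 degenerations applies: $X_s$ is either a smooth K3 (Type I), a chain of smooth rational and elliptic-ruled surfaces glued along elliptic curves (Type II), or a configuration of smooth rational surfaces whose dual complex triangulates $S^2$ (Type III). Working componentwise and using the polarisation degree $\calL_s.\calL_s = 2$, an argument analogous to the application of \cite[Proposition 8]{fk3s} in the proof of Proposition \ref{hyperprop} shows that $|\calL_s|$ is either base point free or of the form $|2E|+F$, with $E$ an elliptic pencil on one component and $F$ a fixed $(-2)$-curve. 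Riemann--Roch on the SNC surface $X_s$ then produces precisely the hyperelliptic or unigonal presentation of Assumption \ref{mainass}(iv).

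The main obstacle lies in the componentwise analysis on the Type II and Type III fibres, where $X_s$ is reducible: one must verify that the individual restrictions of $\calL$ to the components of $X_s$, together with their gluing data along the double curves, assemble into a global linear system on $X_s$ of exactly the form predicted above, rather than introducing pathological behaviour concentrated on a single component or double curve.
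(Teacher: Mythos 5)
Your overall strategy --- reduce to a fibrewise computation of $\bigoplus_{n}H^0(X_s,\calL_s^n)$ via Lemma \ref{locallemma} and then classify the degenerate fibres --- has the right shape, but there is a genuine gap at the very first step: the hypotheses of Lemma \ref{locallemma} are generally \emph{false} on $X$ itself, so they cannot simply be ``verified'' there. A smooth semistable degeneration of K3 surfaces need not be a Kulikov model: the evaluation map $\pi^*\pi_*\omega_{X/S}\to\omega_{X/S}$ is an isomorphism only if $\omega_{X/S}$ restricts trivially to \emph{every} fibre, and this fails, for instance, for the blow-up of a Kulikov model at a point of a degenerate fibre (still smooth with reduced SNC fibres, but with $\omega_{X/S}$ restricting to $\calO_{\Proj^2}(-2)$ on the exceptional component). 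Cohomology and base change gives you that $\pi_*\omega_{X/S}$ is invertible, but not that its section generates $\omega_{X/S}$ along a degenerate fibre. Similarly, $\calL=\calO_X(H)\otimes\pi^*\calM$ need not be $\pi$-nef near a degenerate fibre: flatness of $H$ over $S$ and the canonical condition on $(X,H)$ give no lower bound on $H\cdot C$ for a curve $C$ contained in $H\cap X_s$. If nefness and fibrewise triviality of $\omega_X$ were automatic, the theorem would follow far more easily than it does.

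The paper's proof supplies exactly the ingredient your argument is missing: it first replaces $(X,\pi,\calL)$ by a birational model $(X',\pi',\calL')$ over $S$ with the \emph{same} relative log canonical algebra, constructed (following \cite[Section 2]{dk3sd2}) precisely so that $\omega_{X'}$ is trivial in a neighbourhood of every fibre and $\calL'$ is $\pi'$-nef; only after this birational preprocessing is the fibrewise analysis legitimate, and the classification of the resulting degenerate fibres together with their section algebras is then quoted wholesale from \cite[Theorem 3.1]{dk3sd2} rather than derived by a direct Kulikov-plus-Riemann--Roch computation. You correctly flag the Type II/III analysis as an obstacle, but that is secondary: even before reaching it, the application of Lemma \ref{locallemma} at a degenerate fibre of the original $X$ is unjustified, and repairing this requires the birational modification step rather than a refinement of your base-change argument.
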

\begin{proof} Applying the results of \cite[Section 2]{dk3sd2} locally around the degenerate fibres of $\pi$, we see that we may find a threefold fibred by K3 surfaces of degree two $(X',\pi',\calL')$ that is birational to $(X,\pi,\calL)$ over $S$ and has the same relative log canonical algebra, but for which $\calL'$ is $\pi'$-nef and $\omega_{X'}$ is trivial in a neighbourhood of any fibre (this construction is detailed in full in \cite[Section 2.4]{mythesis}). Given this, applying \cite[Theorem 3.1]{dk3sd2} in a neighbourhood of every fibre shows that Assumption \ref{mainass}(iv) holds for $(X',\pi',\calL')$ and so, as $(X',\pi',\calL')$ and $(X,\pi,\calL)$ have the same relative log canonical algebra, it must also hold for $(X,\pi,\calL)$.
\end{proof}

In light of this result, we conjecture that Assumption \ref{mainass}(iv) should hold for any threefold fibred by K3 surfaces of degree two that satisfies Assumptions \ref{mainass}(i) and \ref{mainass}(ii). An analogous result is known to hold for genus two curves, this was proved by Mendes-Lopes \cite[Theorem 3.7]{ml}.

\section{Structure of the Relative Log Canonical Algebra} \label{relcanalg}

We now embark upon our construction of the relative log canonical model of a threefold fibred by K3 surfaces of degree two. In order to do this, we will try to emulate the construction of the relative canonical model for a fibration by genus 2 curves, given originally by Catanese and Pignatelli \cite{flgi}. As such, the course of our construction will follow \cite{flgi} quite closely.

We begin by recalling the set up. Fix a nonsingular complex curve $S$, and let $(X,\pi,\calL)$ be a threefold fibred by K3 surfaces of degree two over $S$ satisfying Assumptions \ref{mainass}. Then it follows from Lemma \ref{fingen} that the relative log canonical model $X^c$ of $X$ over $S$ is well-defined, and the fibres of $\pi^c\colon X^c \to S$ are classified by Assumption \ref{mainass}(iv).

By definition, the general fibre $X_s$ of $\pi\colon X \to S$ is a (possibly singular) K3 surface of degree two with polarisation $\calL_s$ induced by $\calL$. Furthermore, by Assumption \ref{mainass}(iii), $\calL_s$ defines a base point free linear system on $X_s$, so the restriction of the birational map $\phi\colon X - \to X^c$ to $X_s$ is a birational morphism. It follows from Proposition \ref{hyperprop} that the image of $X_s$ under this morphism is a double cover of $\Proj^2$ ramified over a (possibly singular) sextic curve.

We are now ready to start our pursuit of an explicit construction for the relative log canonical model of $(X,\pi,\calL)$. Recall from Section \ref{relsect} that the relative log canonical algebra of $(X,\pi,\calL)$ is defined to be the graded algebra 
\[ \calR(X,\pi,\calL) = \bigoplus_{n=0}^{\infty} \calE_n := \bigoplus_{n=0}^{\infty} \pi_*((\omega_X \otimes \calL)^{[n]}) \]
and the relative log canonical model of $(X,\pi,\calL)$ is $X^c:=\mathbf{Proj}_S (\calR(X,\pi,\calL))$. We will try to find a way to construct $\calR(X,\pi,\calL)$ explicitly, which will in turn allow us to construct the relative log canonical model. First, however, we would like to know more about the structure of $\calR(X,\pi,\calL)$. 

Firstly, by definition, the sheaves $(\omega_X \otimes \calL)^{[n]}$ are reflexive for all $n \geq 0$. So, by \cite[Corollary 1.7]{srs}, the sheaves $\calE_n := \pi_*((\omega_X \otimes \calL)^{[n]})$ are also reflexive and thus, since $S$ is a smooth curve, must be locally free $\calO_S$-modules by \cite[Corollary 1.4]{srs}. 

Next, since the general fibre of $\pi\colon X \to S$ admits a birational morphism to a double cover of $\Proj^2$, there exists a birational involution $\iota$ on $X$ exchanging the sheets of this cover. We can use this involution to split the relative log canonical algebra into an invariant and an anti-invariant part. Let $U' \subset S$ be an open set. Then $U := \pi^{-1}(U')$ is \makebox{$\iota$-invariant} and $\iota$ acts linearly on the space of sections $H^0(U,(\omega_X \otimes \calL)^{[n]}) = \calE_n(U)$, which splits as the direct sum of the $(+1)$-eigenspace and the $(-1)$-eigenspace. 

This allows us to decompose $\calE_n$ into
\[\calE_n = \calE_n^+ \oplus \calE_n^-\]
and we can split the relative log canonical algebra as
\[\calR(X,\pi,\calL) = \calR(X,\pi,\calL)^+ \oplus \calR(X,\pi,\calL)^-.\]
Furthermore, observe that $\calR(X,\pi,\calL)^+$ is a subalgebra of $\calR(X,\pi,\calL)$, and that $\calR(X,\pi,\calL)^-$ is an $\calR(X,\pi,\calL)^+$-module.

This decomposition will prove to be invaluable when we attempt to construct $\calR(X,\pi,\calL)$. We can calculate the ranks of the locally free sheaves $\calE_n^+$ and $\calE_n^-$ for $n\geq 1$ to get the following table: 
\[ \renewcommand{\tabcolsep}{10mm} \renewcommand{\arraystretch}{1.5} \begin{tabular}{|c|c|c|}  \hline 
$n$ & rank $\calE_n^+$ &  rank $\calE_n^-$ \\ \hline
even & $\frac{(n+1)(n+2)}{2}$ & $\frac{(n-1)(n-2)}{2}$\\
odd & $\frac{(n-1)(n-2)}{2}$ & $\frac{(n+1)(n+2)}{2}$ \\ \hline
\end{tabular}\]
Furthermore, we know that $\calE_0 = \calE_0^+ = \calO_S$ and $\calE_1 = \calE_1^-$.

Next, we would like to study the multiplicative structure of $\calR(X,\pi,\calL)$, paying particular attention to how it interacts with the decomposition above. So let $\mu_{n,m}\colon \calE_n \otimes \calE_m \to \calE_{n+m}$ and $\sigma_n\colon \Sym^n(\calE_1) \to \calE_n$ denote the homomorphisms induced by multiplication in $\calR(X,\pi,\calL)$. The maps $\sigma_n$ will prove to be particularly useful as, if we can determine more information about them, we should be able to use them to reconstruct the sheaves $\calE_n$ from $\calE_1$. We have:

\begin{lemma}\label{sigmaprop} The maps $\sigma_n\colon \Sym^n(\calE_1) \to \calE_n$ are injective for all $n \geq 1$ and their image is contained in $\calE_n^+$ when $n$ is even and in $\calE_n^-$ when $n$ is odd.\end{lemma}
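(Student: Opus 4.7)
The invariance statement is essentially automatic. The involution $\iota$ acts as an $\calO_S$-algebra involution on $\calR(X,\pi,\calL)$, and I would exploit the already-noted fact $\calE_1 = \calE_1^-$: every local section of $\calE_1$ is $\iota$-anti-invariant, so an $n$-fold product of such sections is $(-1)^n$-invariant. Since $\sigma_n$ is defined by $n$-fold multiplication in $\calR(X,\pi,\calL)$, its image therefore lies in $\calE_n^+$ when $n$ is even and in $\calE_n^-$ when $n$ is odd.

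For injectivity, my plan is to reduce to a fibrewise check at a single well-chosen closed point. As $\calE_1$ is locally free, so is $\Sym^n\calE_1$, and the kernel $K := \ker\sigma_n$ is a coherent subsheaf of a locally free sheaf on the smooth curve $S$; it is therefore torsion-free and hence itself locally free of some constant rank $r$. To prove $K = 0$, it suffices to exhibit one closed point $s \in S$ at which $\sigma_n\otimes k(s)$ is injective: then Nakayama forces $K_s = 0$, which, together with local freeness and connectedness of $S$, forces $r = 0$.

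To exhibit such an $s$, I would invoke Proposition \ref{hyperprop} to select a general closed point at which the hyperelliptic case of Assumption \ref{mainass}(iv) holds. At such an $s$ we have
\[\calR(X,\pi,\calL)_s\otimes_{\calO_{S,s}} k(s) \cong \C[x_1,x_2,x_3,z]/(z^2 - f_6(x_i)),\]
with $\deg x_i = 1$ and $\deg z = 3$. Since $\Sym^n$ commutes with base change, $\Sym^n\calE_1\otimes k(s)$ is identified with the $n$th symmetric power of $\langle x_1,x_2,x_3\rangle$, and $\sigma_n\otimes k(s)$ sends the monomial $x_1^a x_2^b x_3^c$ ($a+b+c=n$) to the element of the same name in the degree-$n$ part of the algebra. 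These $\binom{n+2}{2}$ monomials are linearly independent there, because the defining relation $z^2 = f_6$ involves $z$ and therefore the inclusion $\C[x_1,x_2,x_3]\hookrightarrow \C[x_1,x_2,x_3,z]/(z^2 - f_6)$ is injective. Hence $\sigma_n\otimes k(s)$ is injective, and the argument concludes.

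The only mild obstacle in this scheme is the passage from fibrewise injectivity at one point to sheaf-level injectivity on all of $S$; this is cleanly handled by the torsion-free/local-freeness combination available on a smooth curve, so once the explicit presentation from Assumption \ref{mainass}(iv) is in hand the remaining verification is a routine monomial count.
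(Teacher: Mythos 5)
Your proof is correct and follows essentially the same route as the paper: reduce injectivity to a fibrewise check via local freeness on the smooth curve $S$, verify it using the explicit presentation from Assumption \ref{mainass}(iv), and deduce the parity of the image from $\calE_1=\calE_1^-$. Your extra care in checking injectivity only at a general (hyperelliptic) point is in fact the right reading of the paper's terser argument, since at unigonal points the fibre map $\Sym^n(\calE_1)\otimes k(s)\to\calE_n\otimes k(s)$ fails to be injective for $n\geq 2$ because of the relation $g_2(x_i)$.
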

\begin{proof} We begin by showing injectivity. As $\Sym^n(\calE_1)$ and $\calE_n$ are locally free for all $n > 0$, it is enough to show that $\sigma_n$ is injective on the fibres of the associated vector bundles. But this follows easily from the explicit description of these fibres given by Assumption \ref{mainass}(iv). With this in place, the statement on the images of $\sigma_n$ follows immediately from the fact that $\calE_1 = \calE_1^-$.
\end{proof}  

Define $\calT_n := \mathrm{coker}(\sigma_n)$ and, using Lemma \ref{sigmaprop}, write
\begin{align*}
\calT_n^+ := \mathrm{coker}(\Sym^n(\calE_1) \to \calE_n^+) &\ \,\, \mathrm{for} \ n \ \mathrm{even}\\
\calT_n^- := \mathrm{coker}(\Sym^n(\calE_1) \to \calE_n^-) &\ \,\, \mathrm{for} \ n \ \mathrm{odd}.
\end{align*}
Then, by Lemma \ref{sigmaprop} again, we can decompose
\[\calT_n = \left\{ \begin{array}{ll}	\calT_n^+ \oplus \calE_n^- & \mathrm{for} \ n \ \mathrm{even}\\
										\calT_n^- \oplus \calE_n^+ & \mathrm{for} \ n \ \mathrm{odd}. \end{array} \right. \]
Finally, note that the sheaves $\calT^{\pm}_n$ are torsion sheaves.

With this in place, we are ready to begin describing how to construct $\calR(X,\pi,\calL)$. 

\section{Constructing the Relative Log Canonical Algebra} \label{constructingR}

In this section we detail the explicit construction of the relative log canonical algebra $\calR(X,\pi,\calL)$. In order to do this we follow the construction given by Catanese and Pignatelli in \cite{flgi}. This will involve constructing a graded subalgebra $\calA$ of $\calR(X,\pi,\calL)$ that is simpler to construct explicitly, and that can act as a ``stepping stone'' on the way to the construction of $\calR(X,\pi,\calL)$.

Before we start, however, it is convenient to explain some of the geometry that motivates this algebraic approach. As we mentioned before, it follows from Proposition \ref{hyperprop} that the general fibre of $\pi\colon X \to S$ admits a birational morphism to a double cover of $\Proj^2$ ramified over a sextic curve. In a similar fashion to Horikawa's \cite{oaspcg2} construction of models for surfaces fibred by genus two curves, one might consider constructing a na\"{i}ve model for $X$ as a double cover of a $\Proj^2$-bundle on $S$ ramified over a divisor that intersects the general fibre in a sextic. However, it is shown in \cite[Example 2.1.1]{mythesis} that if $\pi\colon X \to S$ contains any unigonal fibres then their structure is destroyed by such a construction.

To explain how we will solve this problem, we need to be a little more precise about what is going wrong. Let $S_0$ be the subset of $S$ over which the hyperelliptic case of Assumption \ref{mainass}(iv) holds, which is Zariski open by Proposition \ref{hyperprop}. The open set $\pi^{-1}(S_0)$ in $X$ is isomorphic to a double cover of the $\Proj^2$-bundle on $S_0$ given by  $\Proj_{S_0}(\calE_1)$. The branch divisor is defined using the cokernel of the map $\sigma_3\colon \Sym^3(\calE_1) \to \calE_3$, which is locally free on $S_0$. Unfortunately, we find that if we try to extend this definition to all of $S$ then we lose the local freeness of the cokernel, so the branch divisor is no longer well-defined. This can be solved by performing the construction on $S_0$ and extending to the whole of $S$ using the properness of the Hilbert scheme. However, this process may destroy the structure of the fibres over $S - S_0$. We refer the interested reader to \cite[Sections 1.3 and 2.1]{mythesis} for more details.

This problem only occurs on fibres where the cokernel of the map $\sigma_3$ is not locally free. As we saw above, this cokernel can be written as $(\calT_3^- \oplus \calE_3^+)$, where $\calT_3^-$ is a torsion sheaf. Furthermore, as we shall see in Lemma \ref{Tprops} below, $\calT_3^-$ is supported exactly on the points of $S$ corresponding to the unigonal fibres. This explains why this construction fails on such fibres.

To solve this problem, we will construct an algebra $\calA$ that takes better account of the properties of the maps $\sigma_n$ than $\Sym(\calE_1)$ does. Instead of a $\Proj^2$-bundle, $\mathbf{Proj}_S(\calA)$ will be a fibration of $S$ by rational surfaces. We can then try to construct $X^c = \mathbf{Proj}_S(\calR(X,\pi,\calL))$ as a double cover of $\mathbf{Proj}_S(\calA)$.

However, in order to do this we will need to better understand the maps $\sigma_n$. We begin by studying the structure of the cokernels $\calT_n$. We have the following analogue of \cite[Lemma 4.1]{flgi}:

\begin{lemma} \label{Tprops} Let $(X,\pi,\calL)$ be a threefold fibred by K3 surfaces of degree two over $S$ that satisfies Assumptions \ref{mainass}. Then
\begin{enumerate}[\textup{(}i\textup{)}]
\item $\calT_2 = \calT_2^+$ is isomorphic to the structure sheaf of an effective divisor $\tau$, supported on the points of $S$ corresponding to the unigonal fibres of $\pi$;
\item $\tau$ determines all the sheaves $\calT_n$ as follows:
\begin{align*}
\calT_{2n}^+ & \cong   \bigoplus_{i=1}^{n} \calO_{i\tau}^{\oplus(4(n-i)+1)}\\
\calT_{2n+1}^- & \cong  \bigoplus_{i=1}^n \calO_{i\tau}^{\oplus(4(n-i)+3)}
\end{align*}
\end{enumerate}
\end{lemma}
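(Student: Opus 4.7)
The plan is to work locally at each closed point $s \in S$, exploiting the fibrewise descriptions in Assumption \ref{mainass}(iv) to control the maps $\sigma_n$. For (i), the rank table in Section \ref{relcanalg} gives $\mathrm{rank}(\calE_2^-) = 0$, so $\calT_2 = \calT_2^+$. Both $\Sym^2\calE_1$ and $\calE_2$ are locally free of rank $6$, and $\sigma_2$ is injective by Lemma \ref{sigmaprop}, so $\calT_2$ is a torsion sheaf. Evaluating on fibres using Assumption \ref{mainass}(iv) shows that $\sigma_2 \otimes k(s)$ is an isomorphism at hyperelliptic points and has one-dimensional cokernel at unigonal points, whence $\calT_2$ is supported on the unigonal fibres. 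Applying Smith normal form to $\sigma_2|_s$ at each such $s$ yields integers $k_s > 0$ such that, in suitable local bases, $\sigma_2|_s$ is represented by $\mathrm{diag}(1,1,1,1,1,t^{k_s})$; setting $\tau := \sum_s k_s \cdot s$ we obtain $\calT_2 \cong \calO_\tau$.

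For (ii), fix a unigonal point $s$ with $\tau$-multiplicity $k$ and let $t$ be a uniformizer of $\calO_{S,s}$. The Smith form from (i) provides local elements $q \in (\Sym^2 \calE_1)_s$ and $y \in \calE_{2,s}$ with $\sigma_2(q) = t^k y$, such that $q|_{t=0}$ equals the quadric $g_2$ of Assumption \ref{mainass}(iv). The key step is to upgrade this data to a presentation of the stalk
\[ \calR_s \cong \calO_{S,s}[x_1,x_2,x_3,y,z]/(z^2 - G(x,y,t),\,q(x) - t^k y) \]
by lifting the fibrewise generators and relations, with injectivity of the natural surjection from the right-hand side following from a graded rank comparison (both sides are free $\calO_{S,s}$-modules of rank $n^2+2$ in degree $n$) together with flatness of each $\calE_{n,s}$ over $\calO_{S,s}$. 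The involution $\iota\colon z \mapsto -z$, together with the sign $(-1)^n$ inherited from $(\omega_X \otimes \calL)^{[n]}$, then identifies both $\calE_{2n}^+$ and $\calE_{2n+1}^-$ with graded pieces of the subalgebra $A := \calO_{S,s}[x_1,x_2,x_3,y]/(q - t^k y)$.

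Using the relation $q = t^k y$ iteratively, $A_N$ admits an $\calO_{S,s}$-basis indexed by quadruples $(a_1,a_2,c,b)$ with $c \in \{0,1\}$ and $a_1 + a_2 + c + 2b = N$; the standard monomial basis of $\Sym^N \calE_1$ maps under $\sigma_N|_s$ diagonally onto this basis, with the entry at $(a_1,a_2,c,b)$ equal to $t^{bk}$. Hence $\mathrm{coker}(\sigma_N)|_s \cong \bigoplus_{b \geq 1} \calO_{b\tau,s}^{\oplus M_b}$, where $M_b = 2(N-2b)+1$ counts triples $(a_1,a_2,c)$ with $a_1+a_2+c = N-2b$ and $c \in \{0,1\}$; taking $N = 2n$ gives $M_b = 4(n-b)+1$, while $N = 2n+1$ gives $M_b = 4(n-b)+3$, reproducing the formulas in the statement.

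The main obstacle is establishing the local presentation of $\calR_s$ in Part (ii). Since Assumption \ref{mainass}(iv) describes only the central fibre, one must lift the generators and relations to the whole stalk and verify that no further relations arise, which relies on comparing graded Hilbert series with the known ranks of the $\calE_n$. A secondary subtlety is that the quadratic form $q$ may be degenerate, but a suitable linear change of basis on $\calE_{1,s}$ reduces the monomial combinatorics to the form used above, and the count $M_b$ is insensitive to the rank of $q$.
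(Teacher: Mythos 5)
Your argument is correct and follows essentially the same route as the paper: a stalk-wise computation at the unigonal points, lifting the degree-two relation to $q - t^k y$ (the paper does this by flatness and a normalisation $\mu(t)=t^{r-1}$ rather than by Smith normal form, obtaining the same multiplicity $r=k$), identifying $\calE_{2n}^+$ and $\calE_{2n+1}^-$ with graded pieces of the subalgebra generated in degrees $\leq 2$, and then counting monomials in the quotient. The only imprecision is that $\sigma_N|_s$ is not literally diagonal in your bases but only triangular with respect to the filtration by powers of $y$, with the relevant entries unit multiples of $t^{bk}$; this still yields the stated cokernel (and the paper's own listing of generators glosses over the same point).
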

\begin{proof} (Following the proof of \cite[Lemma 4.1]{flgi}). Assumption \ref{mainass}(iv) describes the two possibilities (hyperelliptic and unigonal) for the localisation of the relative log canonical algebra $\calR(X,\pi,\calL)_s \otimes_{\calO_{S,s}} k(s)$ at any point $s \in S$. In the notation of that assumption, we see that $x_i$ are the $\iota$-antiinvariant sections and $y$ and $z$ are $\iota$-invariant. Furthermore, examination of the two cases shows that the cokernels $\calT_n$ are locally free away from the points where the unigonal case holds, so the torsion sheaves $\calT^+_{2n}$ and $\calT^-_{2n+1}$ are supported on these points.

Thus, we may restrict our attention to those points where the unigonal case holds. Around such a point $P$, the sheaf $\calE^+_2$ is locally generated by the sections $x_1^2$,~$x_2^2$, $x_3^2$, $x_1x_2$, $x_1x_3$, $x_2x_3$ and $y$. Furthermore, as $g_6(0,0,0,1) \neq 0$ at such a point, we may assume that the coefficient of $y^3$ in $g_6$ is non-zero and, by completing the square in the $x_i$, we may also assume that
\[g_2(x_i) = x_1^2 - x_2(ax_2 + bx_3),\]
for some $a,b \in \C$ not both zero. Then, by flatness, if $t$ is a uniformising parameter for $\calO_{S,P}$ we can lift the relation $g_2$ to
\[g_2(t) = x_1^2 - x_2(ax_2 + bx_3) + t \mu(t)y + t \psi(x_i,t).\]

Note that $\mu(t)$ is not identically zero, as $x_1$, $x_2$ and $x_3$ are algebraically independent for $t \neq 0$. Therefore, after changing coordinates in $S$, we may assume that $\mu(t) = t^{r-1}$ for a suitable integer $r \geq 1$. We call $r$ the \emph{multiplicity} of the point $P$. Using this and the relation above, the stalk of $\calT_2$ at $P$ is the $\calO_{S,P}$-module
\[ \calT_{2,P} = (\mathrm{coker}(\sigma_2))_P \cong \calE^+_{2,P} / \mathrm{Im}(\sigma_{2,P}) \cong \calO_{S,P}/(t^r),\]
generated by the class of $y$.

Define $\tau$ to be the divisor on $S$ given by $\sum_i r_i P_i$, where $P_i$ are the points in $S$ over which the fibres are unigonal and the $r_i$ are the corresponding multiplicities. Then the stalk of $\calO_{\tau}$ at $P_i$ is given by $\calO_{\tau,P_i} \cong \calO_{S,P_i}/(t^{r_i})$ and thus $\calT_2 \cong \calO_{\tau}$. This proves part (i) of Lemma \ref{Tprops}.

Next, we can also choose a lifting of $g_6$ of the form
\[g_6(t) = z^2 - g_6'(x_i,y,t).\]
Since $g_6$ is $\iota$-invariant, $g_6(t)$ must be also, otherwise $z$ would vanish identically on the fibre over $P$. By flatness, $g_2(t)$ and $g_6(t)$ are all the relations of the stalk of $\calR(X,\pi,\calL)$ at $P$. 

Now consider $\calT^-_{2n+1}$. Its stalk at $P$ is given by
\[ (\calT^-_{2n+1})_P = (\mathrm{coker}(\sigma_{2n+1}))_P \cong \calE^-_{2n+1,P}/\mathrm{Im}(\sigma_{2n+1,P}).\]
$\calE^-_{2n+1,P}$ is generated by the $2n^2 + 5n + 3$ monomials 
\[\{x_1h_{2n}(x_2,x_3,y),\ h_{2n+1}(x_2,x_3,y)\},\] where $h_i(x_2,x_3,y)$ denotes any monomial of degree $i$ in $x_2$, $x_3$ and $y$. Similarly, $\mathrm{Im}(\sigma_{2n+1,P})$ is generated by the $4n+3$ monomials 
\[\{x_1h_{2n}(x_2,x_3),\ h_{2n+1}(x_2,x_3)\}.\]
So $(\calT^-_{2n+1})_P$ is generated by the $2n^2 + n$ monomials
\[ \{x_1yh_{2n-2}(x_2,x_3,y),\ yh_{2n-1}(x_2,x_3,y)\}.\]
These monomials can be listed as
\begin{align*} \{y^n x_i\} & \ \mathrm{generates} \ \calO_{n\tau,P}^{\oplus 3} \\
\{x_1y^{n-1}h_2(x_2,x_3),\ y^{n-1}h_3(x_2,x_3)\} &\ \mathrm{generates} \ \calO_{(n-1)\tau,P}^{\oplus 7} \\
\{x_1y^{n-2}h_4(x_2,x_3),\ y^{n-2}h_5(x_2,x_3)\} &\ \mathrm{generates} \ \calO_{(n-2)\tau,P}^{\oplus 11} \\
& \vdots & \\
\{x_1yh_{2n-2}(x_2,x_3),\ yh_{2n-1}(x_2,x_3)\} &\ \mathrm{generates} \ \calO_{\tau,P}^{\oplus (4n - 1)}
\end{align*}
and we see that $\calT_{2n+1}^-  \cong \bigoplus_{i=1}^n \calO_{i\tau}^{\oplus(4(n-i)+3)}$.

Finally, a similar calculation gives $\calT_{2n}^+  \cong \bigoplus_{i=1}^n \calO_{i\tau}^{\oplus(4(n-i)+1)}$; full details may be found in the proof of \cite[Lemma 4.2.1]{mythesis}. This completes the proof of Lemma \ref{Tprops}.\end{proof}

Using Lemma \ref{Tprops}, if we know $\calT_2$ we can determine all of the cokernels $\calT_n$. So it seems sensible to expect that the structure of $\calR(X,\pi,\calL)$ might be determined by its structure in low degrees. With this in mind, we define:

\begin{defn} Let $\calA$ be the graded subalgebra of $\calR(X,\pi,\calL)$ generated by $\calE_1$ and $\calE_2$. Let $\calA_n$ denote its graded part of degree $n$ and write
\[ \calA = \calA_{\mathrm{even}} \oplus \calA_{\mathrm{odd}} = \Big(\bigoplus_{n=0}^{\infty} \calA_{2n}\Big) \oplus \Big(\bigoplus_{n=0}^{\infty} \calA_{2n+1}\Big).\]
\end{defn}

We similarly decompose $\calR(X,\pi,\calL) = \calR_{\mathrm{even}} \oplus \calR_{\mathrm{odd}}$. Then we have the following analogue of \cite[Lemma 4.3]{flgi}:

\begin{lemma} \label{Rprops} $\calR(X,\pi,\calL)$ is isomorphic to $\calA \oplus (\calA[-3] \otimes \calE_3^+)$ as a graded $\calA$-module. Furthermore, $\calA_{\mathrm{even}}$ is the $\iota$-invariant part of $\calR_{\mathrm{even}}$ and $\calA_{\mathrm{odd}}$ is the $\iota$-antiinvariant part of $\calR_{\mathrm{odd}}$. \end{lemma}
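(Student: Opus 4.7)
The plan is to prove the invariance/antiinvariance statement first and then build the $\calA$-module decomposition of $\calR$ by realising $\calE_3^+$ as a ``degree-three free generator over $\calA$''.

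For the second assertion, first observe that $\calE_1 = \calE_1^-$ is entirely $\iota$-antiinvariant, and that $\calE_2 = \calE_2^+$ is entirely $\iota$-invariant (by the table, $\calE_2^-$ is a direct summand of the locally free sheaf $\calE_2$ of rank $0$, hence vanishes). A monomial in $\calA_n$ built from $k$ factors in $\calE_1$ and $(n-k)/2$ factors in $\calE_2$ therefore has $\iota$-parity $(-1)^k = (-1)^n$, giving the inclusions $\calA_{\mathrm{even}} \subseteq \calR^+_{\mathrm{even}}$ and $\calA_{\mathrm{odd}} \subseteq \calR^-_{\mathrm{odd}}$. To obtain the reverse inclusions, note that Assumption \ref{mainass}(iv) makes $\calR_s \otimes k(s)$ entirely explicit at every $s \in S$, and in either case the extra generator $z$ is $\iota$-invariant of degree $3$; hence the $\iota$-invariant-even plus $\iota$-antiinvariant-odd part of $\calR_s \otimes k(s)$ is precisely the subalgebra generated by its degree $1$ and $2$ elements. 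This means $\calA_n \otimes k(s)$ surjects onto $\calE_n^{\pm} \otimes k(s)$, and since the coherent quotient $\calE_n^{\pm}/\calA_n$ then has zero fibre at every point of $S$, it must be zero.

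For the main decomposition, consider the graded $\calA$-linear map
\[
\mu \colon \calA \oplus \bigl(\calA[-3] \otimes \calE_3^+\bigr) \longrightarrow \calR
\]
given by inclusion on the first summand and by multiplication on the second. The image of $\calA$ is $\calR^+_{\mathrm{even}} \oplus \calR^-_{\mathrm{odd}}$ by the first step, while multiplying the antiinvariant/invariant (odd/even) piece $\calA_{n-3}$ by the invariant line bundle $\calE_3^+$ lands in $\calR^-_{\mathrm{even}} \oplus \calR^+_{\mathrm{odd}}$. Using the ranks from the table together with $\mathrm{rank}\,\calE_3^+ = 1$, both sides of $\mu$ are locally free of the same rank in each degree.

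To conclude, I would verify that $\mu$ is fibrewise an isomorphism at every $s \in S$: the relation $z^2 = f_6(x_i)$ or $z^2 = g_6(x_i,y)$ from Assumption \ref{mainass}(iv) lets us reduce the $z$-degree of any element of $\calR_s \otimes k(s)$ to at most one, giving a unique decomposition of the form $A + Bz$ with $A, B$ in the subalgebra generated by degree $1$ and $2$ elements; since $\calE_3^+ \otimes k(s)$ is spanned by the class of $z$, this is precisely the fibre of $\mu$. A map of locally free sheaves of the same rank which is surjective on fibres is an isomorphism by Nakayama on stalks, so $\mu$ is an isomorphism of graded $\calA$-modules. The main obstacle lies in the fibrewise check in the unigonal case, where one must carefully track how the quadric relation $g_2$ interacts with this decomposition; once Assumption \ref{mainass}(iv) is fully unpacked, the rest of the argument is essentially a packaging exercise using the rigidity of locally free sheaves on a smooth curve.
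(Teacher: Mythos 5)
Your proof is correct, but it takes a noticeably different route from the paper's. The paper argues entirely via a local presentation of the whole algebra over $S$: using the lifted relations from the proof of Lemma \ref{Tprops} it writes $\calR(X,\pi,\calL) \cong \calO_S[x_1,x_2,x_3,y,z]/(f_2(t), f_6(t))$ locally with $f_6(t) = z^2 - f_6'(x_i,y,t)$, from which $\calA \cong \calO_S[x_1,x_2,x_3,y]/(f_2(t))$ and $\calR \cong \calA \oplus z\calA$ are read off immediately, with $z$ a local generator of $\calE_3^+$; the eigenspace statement then falls out of the parities of $x_i$, $y$, $z$. You instead avoid the relative presentation altogether: you pin down $\calA_n = \calE_n^{\pm}$ by a parity count on generators plus the Nakayama-type argument that a coherent quotient with vanishing fibres is zero, and then exhibit the splitting as a globally defined $\calA$-linear map $\mu$ which you check is fibrewise surjective between locally free sheaves of equal rank (using Assumption \ref{mainass}(iv) to reduce $z$-degree to at most one in each fibre). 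Both arguments ultimately rest on the same fibrewise input from Assumption \ref{mainass}(iv); your version buys independence from the lifting of the relations over $\calO_S$ (which the paper justifies by flatness in Lemma \ref{Tprops}), at the cost of the explicit local equations that the paper reuses later in Proposition \ref{branchdiv} and Theorem \ref{relmodthm}. The only point worth making explicit in your write-up is that the fibre of the subsheaf $\calE_3^+ \subset \calE_3$ really is the $\iota$-invariant line spanned by $z$, which holds because the eigenspace decomposition of the locally free sheaf $\calE_3$ commutes with passage to fibres.
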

\begin{proof} (Following the proof of \cite[Lemma 4.3]{flgi}). We can unify the hyperelliptic and unigonal cases from Assumption \ref{mainass}(iv) by writing the localisation of the relative log canonical algebra $\calR(X,\pi,\calL)_s \otimes_{\calO_{S,s}} k(s)$ to a point over which the fibre is hyperelliptic as
\[ \C[x_1,x_2,x_3,y,z]/(y,\ z^2 - f_6(x_i)),\]
where the $x_i$ are $\iota$-antiinvariant of degree 1, and $y$ and $z$ are $\iota$-invariant with degrees 2 and 3 respectively. Then in both cases the stalk of $\calA$ is the subalgebra generated by $x_1$, $x_2$, $x_3$ and $y$, so $\calA_{\mathrm{even}}$ is $\iota$-invariant and $\calA_{\mathrm{odd}}$ is $\iota$-antiinvariant.

In both cases, locally on $S$ we may write 
\[\calR(X,\pi,\calL) \cong \calO_S[x_1,x_2,x_3,y,z]/(f_2(t), f_6(t))\] 
with $f_6(t) = z^2 - f_6'(x_i,y,t)$, so locally we have
\begin{enumerate}[(i)]
\item $\calA \cong \calO_S[x_1,x_2,x_3,y]/(f_2(t))$, and
\item $\calR(X,\pi,\calL) \cong \calA \oplus z\calA$.
\end{enumerate}
As $z$ is a local generator of $\calE_3^+$, this gives $\calR(X,\pi,\calL) \cong \calA \oplus (\calA[-3] \otimes \calE_3^+)$.

Finally, the statement on the $\iota$-invariant and $\iota$-antiinvariant parts follows from the fact that $\calA_{\mathrm{even}}$ is $\iota$-invariant and $\calR_{\mathrm{even}} \cong \calA_{\mathrm{even}} \oplus z\calA_{\mathrm{odd}}$, and  $\calA_{\mathrm{odd}}$ is $\iota$-anti\-invariant and $\calR_{\mathrm{odd}} \cong \calA_{\mathrm{odd}} \oplus z\calA_{\mathrm{even}}$.\end{proof}

$\mathbf{Proj}_S(\calA)$ is a fibration of $S$ by rational surfaces with natural projection map $\pi_{\calA}\colon \mathbf{Proj}_S(\calA) \to S$. The inclusion $\calA \subset \calR(X,\pi,\calL)$ yields a factorisation of the fibration $\pi\colon X \to S$ as
\[X \stackrel{\phi}{- \to} X^c = \mathbf{Proj}_S(\calR(X,\pi,\calL)) \stackrel{\psi}{\longrightarrow} \mathbf{Proj}_S(\calA) \stackrel{\pi_{\calA}}{\longrightarrow} S.\]
We will attempt to construct $\calA$ first, then use the properties of the map $\psi$ to reconstruct $\calR(X,\pi,\calL)$.

As $\calA$ is generated by $\calE_1$ and $\calE_2$, we might expect that $\calA$ can be reconstructed from the locally free sheaves $\calE_1$ and $\calE_2$ and the map $\sigma_2$ that relates them. The next proposition, our analogue of \cite[Lemma 4.4]{flgi}, gives us a way to do this:

\begin{prop} \label{Aseqs} With notation as above, there are exact sequences
\[\begin{array}{lcl} (*) & \Sym^2(\calE_1 \wedge \calE_1) \otimes \Sym^{n-2}(\calE_2) \stackrel{i_n}{\longrightarrow} \Sym^n(\calE_2) \longrightarrow \calA_{2n} \longrightarrow 0 & (n \geq 2) \\
(**) & \calE_1 \otimes (\calE_1 \wedge \calE_1) \otimes \calA_{2n-2} \stackrel{j_n}{\longrightarrow} \calE_1 \otimes \calA_{2n} \longrightarrow \calA_{2n+1} \longrightarrow 0 & (n \geq 1) \end{array}\]
where
\begin{align*} i_n\big((x_i \wedge x_j)(x_k \wedge x_l) \otimes r\big) & := \big(\sigma_2 (x_i x_k) \sigma_2 (x_j x_l) - \sigma_2 (x_i x_l)\sigma_2 (x_j x_k)\big)r,\\
j_n\big(l \otimes (x_i \wedge x_j) \otimes r\big) & := x_i \otimes \big(\sigma_2 (x_j l)r) - x_j \otimes (\sigma_2 (x_i l)r\big).
\end{align*}
Furthermore, if $n=2$ then sequence $(*)$ is also exact on the left.
\end{prop}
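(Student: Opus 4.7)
The plan is to verify each of the two sequences by three steps (surjectivity at the right, $\mathrm{Im}(i_n) \subseteq \ker$ or $\mathrm{Im}(j_n) \subseteq \ker$, and the reverse inclusion), and then to treat left exactness of $(*)$ at $n=2$ separately. The first two steps are routine. Since $\calA$ is commutative and generated by $\calE_1 \oplus \calE_2$, any local section of $\calA_{2n}$ is a sum of products of $a$ local sections of $\calE_1$ and $b$ local sections of $\calE_2$ with $a + 2b = 2n$, forcing $a = 2c$ even. Commutativity then lets me pair up the $\calE_1$-factors and rewrite $x_1 \cdots x_{2c} = \prod_{i=1}^{c}\sigma_2(x_{2i-1}x_{2i})$, exhibiting the given section as the image of an element of $\Sym^{b+c}(\calE_2) = \Sym^n(\calE_2)$; this gives surjectivity in $(*)$, and the analogous argument with $a = 2c+1$ odd (one $\calE_1$-factor set aside) gives surjectivity in $(**)$. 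The forward inclusion is direct: in $\calA_4$, $\sigma_2(x_ix_k)\sigma_2(x_jx_l) = x_ix_jx_kx_l = \sigma_2(x_ix_l)\sigma_2(x_jx_k)$ by commutativity, so $\mathrm{Im}(i_n)$ dies in $\calA_{2n}$; similarly the image of $j_n$ vanishes in $\calA_{2n+1}$.

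\emph{The reverse inclusion is the main obstacle.} I would work locally on $S$: choose an affine open $U \subset S$ over which $\calR(X,\pi,\calL)|_U$ admits the presentation of Assumption \ref{mainass}(iv), lifted as in the proof of Lemma \ref{Tprops}, so that $\calA|_U \cong \calO_U[x_1,x_2,x_3,y]/(f_2(t))$ with $f_2(t) = x_1^2 - x_2(ax_2 + bx_3) + t^r y + t\psi(x_i,t)$ in the unigonal case and $y$ eliminated in the hyperelliptic case. The kernel of $\Sym^n(\calE_2) \to \calA_{2n}$ is then locally free of rank $\binom{n+5}{5} - (n+1)(2n+1)$; for $n=2$ this equals $6$, matching the rank of $\Sym^2(\calE_1 \wedge \calE_1)$. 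In the hyperelliptic case $\calE_2 = \Sym^2(\calE_1)$ locally, and the assertion reduces to the classical fact that the homogeneous ideal of the second Veronese embedding $\Proj^2 \hookrightarrow \Proj^5$ is generated in degree $2$ by the six Plücker quadrics $(x_ix_k)(x_jx_l) - (x_ix_l)(x_jx_k)$. In the unigonal case, with local basis $\{x_1x_2, x_1x_3, x_2^2, x_2x_3, x_3^2, y\}$ of $\calE_2$, the element $\sigma_2(x_1^2)$ is expressed as $ax_2^2 + bx_2x_3 - t^ry - t\psi$, so each Plücker relation involving $\sigma_2(x_1^2)$ acquires a $y$-contribution weighted by $t^r$; a local bookkeeping in the spirit of \cite[Lemma 4.4]{flgi} confirms that the six Plücker relations in degree two remain linearly independent over $\calO_U$ and span the rank-$6$ kernel. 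The case $n \geq 3$ then follows by tensoring with $\Sym^{n-2}(\calE_2)$. The argument for $(**)$ is parallel, with Plücker relations replaced by the linear relations $x_i \otimes \sigma_2(x_jl)r - x_j \otimes \sigma_2(x_il)r$.

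\emph{Left exactness at $n=2$.} The map $i_2$ factors as
\[
\Sym^2(\calE_1 \wedge \calE_1) \xrightarrow{\mathrm{Pl}} \Sym^2(\Sym^2(\calE_1)) \xrightarrow{\Sym^2(\sigma_2)} \Sym^2(\calE_2),
\]
where $\mathrm{Pl}$ is the classical Plücker map. Both $\Sym^2(\calE_1 \wedge \calE_1)$ and the kernel of the multiplication $\Sym^2(\Sym^2(\calE_1)) \to \Sym^4(\calE_1)$ are locally free of rank $6$, and $\mathrm{Pl}$ identifies the former with the latter, hence is injective. The map $\Sym^2(\sigma_2)$ is injective because $\sigma_2$ itself is an injective morphism of locally free sheaves of rank $6$ (its cokernel is the torsion sheaf $\calO_\tau$ by Lemma \ref{Tprops}(i)); taking $\Sym^2$ preserves injectivity, as the determinant of $\Sym^2(\sigma_2)$ is a nonzero power of $\det(\sigma_2)$. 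Composing yields injectivity of $i_2$.
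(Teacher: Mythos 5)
Your skeleton matches the paper's: reduce to surjectivity, the complex property, middle exactness, and left exactness at $n=2$; the first two steps are handled correctly, and your left-exactness argument (factoring $i_2$ through the Pl\"ucker map and $\Sym^2(\sigma_2)$, then using that $\det(\sigma_2)$ is not identically zero) is a valid alternative to the paper's fibrewise dimension count, since injectivity of a map out of a locally free sheaf on an integral curve can be checked at the generic point. The dimension count $21-15=6$ you quote is also the one the paper uses (the paper's text transposes the two numbers, but the conclusion is the same).

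The gap is in middle exactness, which is the substance of the proposition. For $(*)$ in the unigonal case, your step ``the case $n \geq 3$ then follows by tensoring with $\Sym^{n-2}(\calE_2)$'' is a non-sequitur: knowing that the six degree-two relations span the rank-$6$ kernel of $\Sym^2(\calE_2)\to\calA_4$ does not imply that their products with $\Sym^{n-2}(\calE_2)$ span the kernel in degree $n$ --- that is exactly the assertion that the ideal is generated in degree two, which is what has to be proved for each $n$. (In the hyperelliptic case your appeal to the classical Veronese result does cover all $n$; in the unigonal case the fibre of $\mathbf{Proj}_S(\calA)$ is the cone over a rational normal quartic, and moreover the relation $f_2(t)$ deforms with $t$, so one needs a uniform argument over $\calO_U$.) The paper handles this with an explicit monomial normal-form algorithm (steps (i)--(iv), with (i) replaced by (i$'$) in the unigonal case) that works for all $n$ at once and on every fibre. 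Separately, your treatment of $(**)$ as ``parallel, with Pl\"ucker relations replaced by the linear relations'' conceals a genuinely different computation: one must show that every syzygy $x_1\otimes f_1 + x_2\otimes f_2 + x_3\otimes f_3 \mapsto 0$ among the linear forms over $\calA_{\mathrm{even}}$ is a combination of the Koszul-type elements in the image of $j_n$; in the paper this is an explicit divisibility/factorisation argument in $\C[x_1,x_2,x_3]$, and in the unigonal case the quadric relation $x_1^2 = x_2(ax_2+bx_3)$ means the ring is not a polynomial ring, so the divisibility steps need to be redone. Both of these points need actual arguments before the proof is complete.
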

\begin{proof} (Based upon the proof of \cite[Lemma 4.4]{flgi}). The maps $\Sym^n(\calE_2) \to \calA_{2n}$ and $\calE_1 \otimes \calA_{2n} \to~\calA_{2n+1}$, induced by the ring structure of $\calA$, are surjective because $\calA$ is generated in degrees $\leq 2$ by definition. Since $\calE_n$ and $\calA_n$ are locally free, the respective kernels are locally free also. Furthermore, both sequences are complexes, by virtue of associativity and commutativity in $\calR(X,\pi,\calL)$.

It remains to show that $(*)$ and $(**)$ are exact in the middle. Since the kernels of the maps to $\calA_n$ are locally free, it is enough to prove this on the fibres of the associated vector bundles.  

We begin with sequence $(*)$. Suppose that $f$ is contained in the kernel of the map to $\calA_{2n}$. We wish to show that $f$ is also in the image of $i_n$.

If the fibre of $\pi\colon X \to S$ over the point under consideration is hyperelliptic, then $\calE_2$ is generated by the images $\sigma_2(x_ix_j)$ for all $i,j \in \{1,2,3\}$. Express $f$ in terms of these generators. Then perform the following algorithm on $f$:
\begin{enumerate}[(i)]
\item If any monomial of $f$ contains a factor of $\sigma_2(x_1x_i)\sigma_2(x_1x_j)$, with $i,j \in \{2,3\}$, replace this factor with $\sigma_2(x_1^2)\sigma_2(x_ix_j)$. Repeat this step until it terminates.
\item If any monomial of $f$ contains a factor of $\sigma_2(x_1x_3)\sigma_2(x_2x_i)$, with $i \in \{2,3\}$, replace this factor with $\sigma_2(x_1x_2)\sigma_2(x_3x_i)$.
\item If any monomial of $f$ contains a factor of $\sigma_2(x_2x_3)\sigma_2(x_2x_3)$, replace this factor with $\sigma_2(x_2^2)\sigma_2(x_3^2)$. Repeat this step until it terminates.
\item Collect like terms in $f$ and simplify.
\end{enumerate}
Call the result $f'$. Note that the kernel of the map to $\calA_{2n}$ is closed under these operations, so $f'$ is in this kernel. Furthermore, $\mathrm{Im}(i_n)$ is also closed under these operations and their inverses, so $f \in \mathrm{Im}(i_n)$ if and only if $f' \in \mathrm{Im}(i_n)$.

Now, any monomial in $f'$ must have the form
\[\sigma_2(x_1^2)^{n_{1,1}}\sigma_2(x_1x_2)^{n_{1,2}}\sigma_2(x_2^2)^{n_{2,2}}\sigma_2(x_2x_3)^{n_{2,3}}\sigma_2(x_3^2)^{n_{3,3}}\sigma_2(x_1x_3)^{n_{1,3}},\]
with $n_{1,2}, n_{2,3}, n_{1,3} \in \{0,1\}$ and $n_{1,3} = 1$ only if $n_{1,2} = n_{2,2} = n_{2,3} = 0$. However, under the map to $\calA_{2n}$ there are no relations between monomials of this form so, since $f'$ is in the kernel of this map, $f'$ must be the zero polynomial. But $0 \in \mathrm{Im}(i_n)$, so $f \in \mathrm{Im}(i_n)$ also.

The proof for points corresponding to unigonal fibres is very similar. This time, $\calE_2$ is generated by $y$ and the images $\sigma_2(x_ix_j)$ for all $i,j \in \{1,2,3\}$ with the exception of $(i,j) = (1,1)$. We perform the same set of operations on $f$, but with step (i) replaced by
\begin{enumerate}[(i)]
\item[(i')] If any monomial of $f$ contains a factor of $\sigma_2(x_1x_i)\sigma_2(x_1x_j)$, with $i,j \in \{2,3\}$, replace this factor with $(a\sigma_2(x_2^2) + b \sigma_2(x_2x_3))\sigma_2(x_ix_j)$, where the degree 2 relation in the unigonal fibre is given by $q(x_1,x_2,x_3) = x_1^2 - x_2(ax_2 + bx_3) = 0$ for some $a,b \in \C$. Repeat this step until it terminates.
\end{enumerate}
Any monomial in the resulting $f'$ must have the form
\[y^{n_{0}}\sigma_2(x_1x_2)^{n_{1,2}}\sigma_2(x_2^2)^{n_{2,2}}\sigma_2(x_2x_3)^{n_{2,3}}\sigma_2(x_3^2)^{n_{3,3}}\sigma_2(x_1x_3)^{n_{1,3}},\]
with $n_{1,2}, n_{2,3}, n_{1,3} \in \{0,1\}$ and $n_{1,3} = 1$ only if $n_{1,2} = n_{2,2} = n_{2,3} = 0$. With this, the remainder of the proof proceeds exactly as in the hyperelliptic case.

It remains to show that this sequence is exact on the left when $n = 2$. This will again follow from the corresponding statement on the fibres of the associated vector bundles. As the map induced by $i_2$ on the fibres of the associated vector bundles is linear, in order to prove that it is injective we need only show that the dimension (as a complex vector space) of its domain is equal to that of its image. A simple calculation yields that the dimension of a fibre of $\calA_4$ is 21, and the dimension of a fibre of $\Sym^2(\calE_2)$ is 15. So, as sequence $(*)$ is exact in the middle, the image of $i_2$ has dimension 6. But a fibre of $\Sym^2(\calE_1 \wedge \calE_1)$ also has dimension 6. Hence, $i_2$ is injective and sequence $(*)$ is exact on the left when $n = 2$. 

Next we consider sequence $(**)$. Given $f$ contained in the kernel of the map to $\calA_{2n+1}$, we wish to show that $f$ is contained in the image of $j_n$.

First consider the case where the fibre of $\pi\colon X \to S$ over the point under consideration is hyperelliptic. Then the fibre of the $\calO_S$-algebra $\calA$ over this point is isomorphic to  $\C[x_1,x_2,x_3]$. Since the $x_i$ form a basis for the fibre of $\calE_1$, we may write $f$ as 
\[f = x_1 \otimes f_1 + x_2 \otimes f_2 + x_3 \otimes f_3\]
for some $f_1,f_2,f_3 \in \C[x_1,x_2,x_3]$ of degree $2n$. This maps to $x_1 f_1 + x_2 f_2 + x_3 f_3$ under the map to $\calA_{2n+1}$, so the condition that $f$ is in the kernel of this map is equivalent to $x_1 f_1 + x_2 f_2 + x_3 f_3 = 0$.

Using this equation, we have $x_1|(x_2 f_2 + x_3 f_3)$. This implies that $f_2$ and $f_3$ have the form
\begin{align*} f_2 & = x_1 \, r_2(x_1,x_2,x_3) + x_3 \, s_{23}(x_2,x_3),\\
f_3 & = x_1 \, r_3(x_1,x_2,x_3) - x_2 \, s_{23}(x_2,x_3),
\end{align*} 
for $r_i, s_{ij} \in \C[x_1,x_2,x_3]$ of degree $(2n-1)$. Repeating this process for $x_2$ and $x_3$, we get
\begin{align*} f_1 & = x_2x_3 \, r_1(x_1,x_2,x_3) + x_2 \, s_{12}(x_1,x_2) + x_3 \, s_{13}(x_1,x_3),\\
f_2 & = x_1x_3 \, r_2(x_1,x_2,x_3) - x_1 \, s_{12}(x_1,x_2) + x_3 \, s_{23}(x_2,x_3), \\
f_3 & = x_1x_2 \, r_3(x_1,x_2,x_3) - x_1 \, s_{13}(x_1,x_3) - x_2 \, s_{23}(x_2,x_3),
\end{align*}
for $r_i, s_{ij} \in \C[x_1,x_2,x_3]$ of degrees $(2n-2)$ and $(2n-1)$ respectively. Furthermore, as $f$ is in the kernel of the map to $\calA_{2n+1}$, we must have $r_1 + r_2 + r_3 = 0$.

Let $l_{ij}(x_i,x_j)$ be any linear factor of $s_{ij}(x_i,x_j)$. Using this, we can express $s_{ij}(x_i,x_j) = l_{ij}(x_i,x_j) \, s'_{ij}(x_i,x_j)$ for $s'_{ij} \in \C[x_1,x_2,x_3]$ of degree $(2n-2)$. Then we have
\begin{align*} f = &\, x_1 \otimes (x_2x_3 \, r_1 + x_2 \, l_{12} \, s'_{12} + x_3 \, l_{13} \, s'_{13})\, + \\
&\,  x_2 \otimes (x_1x_3 \, r_2 - x_1 \, l_{12} \, s'_{12} + x_3 \, l_{23} \, s'_{23}) \,+ \\ 
&\, x_3 \otimes (- x_1x_2 \, r_1 - x_1x_2 \, r_2 - x_1 \, l_{13} \, s'_{13} - x_2 \,l_{23} \, s'_{23}) \\
 = &\, j_n\big( x_2 \otimes (x_1 \wedge x_3) \otimes r_1 + x_1 \otimes (x_2 \wedge x_3) \otimes r_2 + l_{12} \otimes (x_1 \wedge x_2) \otimes s'_{12} +\\ 
&\, \makebox[1em]{} l_{13} \otimes (x_1 \wedge x_3) \otimes s'_{13} + l_{23} \otimes (x_2 \wedge x_3) \otimes s'_{23} \big).
\end{align*}
Hence $f \in \mathrm{Im}(j_n)$ and sequence $(**)$ is exact in the middle.

Finally, we have to show that sequence $(**)$ is exact in the middle when the fibre of $\pi\colon X \to S$ over the point under consideration is unigonal. In this case, the fibre of the $\calO_S$-algebra $\calA$ over this point is isomorphic to 
\[\frac{\C[x_1,x_2,x_3,y]}{(x_1^2 - x_2(ax_2 + bx_3))} = \left(\frac{\C[x_1,x_2,x_3]}{(x_1^2 - x_2(ax_2 + bx_3))}\right)[y]\]
for some $a,b \in \C$.

Once again, let $f$ denote an element of the kernel of the map to $\calA_{2n+1}$. Then, since the $x_i$ form a basis for $\calE_i$, we may write $f = x_1 \otimes f_1 + x_2 \otimes f_2 + x_3 \otimes f_3$ and, using the above characterisation of the fibres of $\calA$, without loss of generality we can replace $f_1$, $f_2$ and $f_3$ with their coefficients in \makebox{$\C[x_1,x_2,x_3]/(x_1^2 - x_2(ax_2 + bx_3))$}. Then the remainder of the proof proceeds much as in the hyperelliptic case. Full details may be found in the proof of \cite[Proposition 4.2.4]{mythesis}.  \end{proof}

The exact sequences $(*)$ and $(**)$ in Proposition \ref{Aseqs} allow us to describe $\calA_{\mathrm{even}}$ as a quotient algebra of $\Sym(\calE_2)$ and $\calA_{\mathrm{odd}}$ as an $\calA_{\mathrm{even}}$-module. The multiplication map $\calA_{\mathrm{odd}} \times \calA_{\mathrm{odd}} \to \calA_{\mathrm{even}}$ is induced by the composition
\[ \calE_1 \otimes \calE_1 \stackrel{\mu_{1,1}}{\longrightarrow} \Sym^2(\calE_1) \stackrel{\sigma_2}{\longrightarrow} \calE_2.\] 
Thus, $\calA$ is completely determined as an $\calO_S$-algebra by the locally free sheaves $\calE_1$ and $\calE_2$ and the map $\sigma_2\colon \Sym^2(\calE_1) \to \calE_2$. 

The structure of $\calA_{\mathrm{even}}$ as a quotient algebra of $\Sym(\calE_2)$ gives a Veronese embedding of $\mathbf{Proj}_S(\calA)$ into $\Proj_S(\calE_2)$ that commutes with the projection to $S$. The projective space bundle $\Proj_S(\calE_2)$ comes equipped with natural invertible sheaves $\calO(n)$ for all $n \in \Z$, which induce invertible sheaves $\calO_{\mathbf{Proj}_S(\calA)}(2n)$ on $\mathbf{Proj}_S(\calA)$.

Now that we have a way to construct $\calA$, we would like to find a way to reconstruct $\calR(X,\pi,\calL)$ from it. By Lemma \ref{Rprops}, we can already construct $\calR(X,\pi,\calL)$ as an \mbox{$\calA$-module}. However, we need to give $\calR(X,\pi,\calL)$ a multiplicative structure to make it into an $\calA$-algebra. In order to do this, we need to determine the multiplication map from $\calE_3^+ \otimes \calE_3^+$ to $\calE_6$. By Lemma \ref{Rprops}, this multiplication map has image contained in $\calA_6$. So the ring structure on $\calR(X,\pi,\calL)$ induces a map
\[\beta\colon (\calE_3^+)^2 \longrightarrow \calA_6.\]

To determine $\beta$, we will study the map $\psi\colon X^c \to \mathbf{Proj}_S(\calA)$. First, however, we need a definition.

\begin{defn} \label{Pdefn} Let $P$ be a point in the support of $\tau$. The fibre of $\mathbf{Proj}_S(\calA)$ over $P$ is of the form
\[\{x_1^2 - x_2(ax_2 +bx_3) = 0\} \subset \Proj_{(1,1,1,2)}[x_1,x_2,x_3,y].\]
This is a cone over the rational normal curve of degree $4$ and is singular at the point $(0\!:\!0\!:\!0\!:\!1)$.

Taking all such singular points associated to the points of $\mathrm{Supp}(\tau)$, we get a subset of $\mathbf{Proj}_S(\calA)$ that we will denote by $\calP$. Note that the projection onto $S$ maps $\calP$ bijectively onto $\mathrm{Supp}(\tau)$.
\end{defn}

Then we have the following analogue of \cite[Theorem 4.7]{flgi}:

\begin{prop} \label{branchdiv} $X^c = \mathbf{Proj}_S(\calR(X,\pi,\calL))$ is a double cover of $\mathbf{Proj}_S(\calA)$, with branch locus consisting of the set of isolated points $\calP$ together with the divisor $B_{\calA}$ in the linear system $|\calO_{\mathbf{Proj}_S(\calA)}(6) \otimes \pi_{\calA}^*(\calE_3^+)^{-2}|$ determined by $\beta$ ($B_{\calA}$ is thus disjoint from $\calP$).
\end{prop}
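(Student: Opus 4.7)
The plan is to leverage Lemma \ref{Rprops} to present $\calR(X,\pi,\calL)$ as an $\calA$-algebra generated by a single additional degree-three element, read off the resulting relative double cover $X^c \to Y := \mathbf{Proj}_S(\calA)$, and then identify its branch locus by combining a global section-theoretic computation (for the divisor $B_\calA$) with a local analysis using Assumption \ref{mainass}(iv) (for the isolated points $\calP$).

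First I would promote the $\calA$-module isomorphism $\calR \cong \calA \oplus (\calA[-3] \otimes \calE_3^+)$ of Lemma \ref{Rprops} to a presentation as an $\calA$-algebra. The only multiplication not already determined is $\calE_3^+ \otimes \calE_3^+ \to \calR_6$, and by Lemma \ref{Rprops} this image is $\iota$-invariant and hence lies in $\calA_6$, so it is encoded exactly by $\beta$. Locally on $S$ a trivialisation $z$ of $\calE_3^+$ therefore yields $\calR \cong \calA[z]/(z^2 - g)$ with $g = \beta(z \otimes z) \in \calA_6$, exhibiting $X^c \to Y$ as a relative double cover whose covering involution is induced by $\iota$. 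Under a reparametrisation $z \mapsto \lambda z$ the local section $g$ transforms as $\lambda^2 g$, so the local data patch to a global section of $\calO_Y(6) \otimes \pi_{\calA}^*(\calE_3^+)^{-2}$ whose zero scheme I would identify with $B_\calA$; away from $\calP$ this is straightforwardly the branch divisor of the cover.

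The main obstacle is the behaviour over the vertex points $\calP$. To handle them I would work in the explicit local model of Assumption \ref{mainass}(iv): near a unigonal point, $X^c$ sits inside a weighted projective bundle $\Proj_S(1,1,1,2,3)$ cut out by $g_2$ and $z^2 - g_6(x_i,y)$, while $Y$ sits in $\Proj_S(1,1,1,2)$ cut out by $g_2$. The fibre of $X^c \to Y$ over the vertex $P = (0\!:\!0\!:\!0\!:\!1)$ consists of the solutions to $z^2 = g_6(0,0,0,y)$ with all $x_i = 0$, and the nontrivial element of the $\Z/2$ stabiliser of this weighted point identifies the two values $z = \pm\sqrt{g_6(0,0,0,1)}$ into a single preimage, so the cover is simply ramified at each point of $\calP$. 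The same nonvanishing condition $g_6(0,0,0,1) \ne 0$ implies that the local defining function of $B_\calA$ does not vanish at $P$, which gives the final claim $B_\calA \cap \calP = \emptyset$ and exhibits the branch locus as the disjoint union $B_\calA \sqcup \calP$.
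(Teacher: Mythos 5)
Your proposal is correct and follows essentially the same route as the paper: it uses Lemma \ref{Rprops} to exhibit $X^c \to \mathbf{Proj}_S(\calA)$ as a double cover locally of the form $z^2 = f_6$, identifies the branch divisor with $B_{\calA}$ where some $x_i \neq 0$, and uses the identification $(0\!:\!0\!:\!0\!:\!1\!:\!a) \sim (0\!:\!0\!:\!0\!:\!1\!:\!-a)$ in $\Proj(1,1,1,2,3)$ together with $g_6(0,0,0,1) \neq 0$ to handle the points of $\calP$. The only (welcome) addition is that you make explicit the patching argument showing $\beta$ gives a global section of $\calO_{\mathbf{Proj}_S(\calA)}(6) \otimes \pi_{\calA}^*(\calE_3^+)^{-2}$, which the paper leaves implicit.
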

\begin{proof} (Following the proof of \cite[Theorem 4.7]{flgi}). Note first that $\psi\colon X^c \to \mathbf{Proj}_S(\calA)$ is a double cover by Lemma \ref{Rprops}. It just remains to calculate the branch locus of $\psi$.

Since the question is local on $S$, we may use the same method as in the proof of Lemma \ref{Rprops} and restrict our attention to an affine open set $U$ over which $X^c$ is isomorphic to the subscheme of $\Proj_{(1,1,1,2,3)}[x_1,x_2,x_3,y,z] \times U$ defined by the equations
\[ f_2(x_1,x_2,x_3,y;t) = 0, \makebox[3em]{} z^2 = f_6(x_1,x_2,x_3,y;t),\]
where $t$ is a parameter on $U$. Furthermore, we note that if the $x_i$'s and $y$ simultaneously vanish then $z=0$ also, which is impossible.

At a point where $x_i \neq 0$ for some $i$, we can localise both equations by dividing by $x_i^2$, respectively by $x_i^6$. Then $z = 0$ is the ramification divisor and $f_6 = 0$ is the branch locus. This equation defines exactly the divisor $B_{\calA} \subset \mathbf{Proj}_S(\calA)$.

At a point where $x_1 = x_2 = x_3 = 0$, we may assume that $y =1$ and we have a point of $\calP$. Note that, since the points $(0\!:\!0\!:\!0\!:\!1\!:\!a)$ and $(0\!:\!0\!:\!0\!:\!1\!:\!-a)$ are identified in $\Proj(1,1,1,2,3)$ for any $a \in \C$, this point must be a branch point of $\psi$. Furthermore, by Assumption \ref{mainass}(iv), $f_6$ cannot vanish at such a point, so $B_{\calA}$ is disjoint from $\calP$. \end{proof}

Putting the results of this section together, we can list the data required to construct the relative log canonical model of a threefold fibred by K3 surfaces of degree two. 

\begin{defn} \label{5tupledefn} Let $(X,\pi,\calL)$ be a threefold fibred by K3 surfaces of degree two over a nonsingular curve $S$ that satisfies Assumptions \ref{mainass}. Then define the \emph{associated $5$-tuple of $(X,\pi,\calL)$} over $S$, denoted $(\calE_1,\tau,\xi,\calE_3^+,\beta)$, as follows:
\begin{itemize}
\item $\calE_1 = \pi_*(\omega_X \otimes \calL)$.
\item $\tau$ is the effective divisor on $S$ whose structure sheaf is isomorphic to $\calT_2$.
\item $\xi \in \mathrm{Ext}_{\calO_S}^1(\calO_{\tau}, \Sym^2(\calE_1))/\mathrm{Aut}_{\calO_s}(\calO_{\tau})$ is the isomorphism class of the pair $(\calE_2,\sigma_2)$ in the sequence
\[ 0 \longrightarrow \Sym^2(\calE_1) \stackrel{\sigma_2}{\longrightarrow} \calE_2 \longrightarrow \calO_{\tau} \longrightarrow 0.\]
\item $\calE_3^+$ is the $\iota$-invariant part of $\pi_*((\omega_X \otimes \calL)^{[3]})$.
\item $\beta \in \Proj(H^0(S, \calA_6 \otimes (\calE_3^+)^{-2})) \cong |\calO_{\mathbf{Proj}_S(\calA)}(6) \otimes \pi_{\calA}^*(\calE_3^+)^{-2}|$ is the class of a section with associated divisor $B_{\calA}$.
\end{itemize}
\end{defn}

\begin{remark} We need one more piece of data than Catanese and Pignatelli \cite{flgi}: the line bundle $\calE_3^+$. This is because a surface fibred by genus two curves is naturally polarised by its canonical divisor, but we require a separate polarisation sheaf $\calL$. The extra piece of data in our case is needed to determine this polarisation sheaf.
\end{remark}

\section{A Generality Result}\label{genres}

In this section we will give a method, based upon the results of Section \ref{constructingR}, to construct relative log canonical models of threefolds fibred by K3 surfaces of degree two, and prove a result about the generality of this construction.

Fix a nonsingular complex curve $S$. We begin with a $5$-tuple $(\calE_1,\tau,\xi,\calE_3^+,\beta)$ of data on $S$, defined by:
\begin{itemize}
\item $\calE_1$ is a rank $3$ vector bundle on $S$.
\item $\tau$ is an effective divisor on $S$.
\item $\xi \in \mathrm{Ext}_{\calO_S}^1(\calO_{\tau}, \Sym^2(\calE_1))/\mathrm{Aut}_{\calO_s}(\calO_{\tau})$ yields a pair $(\calE_2,\sigma_2)$ consisting of a vector bundle $\calE_2$ on $S$ and a map $\sigma_2\colon \Sym^2(\calE_1) \to \calE_2$.
\item $\calE_3^+$ is a line bundle on $S$.
\item $\beta \in \Proj(H^0(S, \calA_6 \otimes (\calE_3^+)^{-2}))$, where $\calA_6$ is defined using $\calE_1$, $\calE_2$, $\sigma_2$ and the exact sequences of Proposition \ref{Aseqs}.
\end{itemize}

Given this data, we begin by constructing a sheaf of $\calO_S$-algebras $\calA$ using the exact sequences of Proposition \ref{Aseqs}. Then we may define a second sheaf of $\calO_S$-algebras
\[ \calR := \calA \oplus (\calA[-3] \otimes \calE_3^+),\]
with multiplicative structure induced by $\calA$ and the map $(\calE_3^+)^2 \to \calA_6$ defined by $\beta$.

\begin{defn} \label{admissdefn} We say that a $5$-tuple $(\calE_1,\tau,\xi,\calE_3^+,\beta)$ is \emph{admissible} if the sheaf of algebras $\calR$ constructed from it satisfies the following conditions:
\begin{enumerate}[(i)]
\item Let $B_{\calA}$ be the divisor of $\beta$ on $\mathbf{Proj}_S(\calA)$; then $B_{\calA}$ does not contain any point of the set $\calP$ defined in \ref{Pdefn}.
\item $\mathbf{Proj}_S(\calR)$ has at worst canonical singularities. 
\end{enumerate}
\end{defn}

We have the following generality result, analogue of \cite[Theorem 4.13]{flgi}:

\begin{thm} \label{relmodthm} Fix a nonsingular complex curve $S$. Let $(X,\pi,\calL)$ be a threefold fibred by K3 surfaces of degree two over $S$ that satisfies Assumptions \ref{mainass}. Then the associated $5$-tuple of $(X,\pi,\calL)$ over $S$ is admissible.

Conversely, let $\calR$ be a sheaf of $\calO_S$-algebras defined by an admissible $5$-tuple $(\calE_1,\tau,\xi,\calE_3^+,\beta)$. Let $X = \mathbf{Proj}_S(\calR)$ and $\pi\colon X \to S$ be the natural projection. Then there is a canonically defined polarisation sheaf $\calL$ on $X$ that makes $(X,\pi,\calL)$ into a threefold fibred by K3 surfaces of degree two that satisfies Assumptions \ref{mainass} and, furthermore, $\mathbf{Proj}_S(\calR)$ is the relative log canonical algebra of $(X,\pi,\calL)$ over $S$ and $(\calE_1,\tau,\xi,\calE_3^+,\beta)$ is its associated $5$-tuple.
\end{thm}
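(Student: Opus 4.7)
The plan is to prove the two directions of the theorem separately. The forward direction will be essentially a packaging of the work in Sections \ref{relsect}--\ref{constructingR}, while the converse requires a concrete construction together with verification of the assumptions.

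For the forward direction, I form the associated $5$-tuple as in Definition \ref{5tupledefn} and then verify the two admissibility conditions. Condition (i), that the branch divisor $B_{\calA}$ is disjoint from the set $\calP$, is exactly the conclusion of Proposition \ref{branchdiv}. For condition (ii), Lemma \ref{Rprops} together with Proposition \ref{Aseqs} show that the algebra $\calR$ built from this $5$-tuple is canonically isomorphic to $\calR(X,\pi,\calL)$, so $\mathbf{Proj}_S(\calR)$ agrees with the relative log canonical model $X^c$. Canonical singularities on $X^c$ then follow from the fact that $(X,H)$ — and hence $X$ itself — is canonical, combined with the general principle from the log MMP that the relative log canonical model of a canonical pair has canonical underlying singularities.

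For the converse, set $X := \mathbf{Proj}_S(\calR)$ and let $\pi\colon X \to S$ be the natural projection. The graded structure of $\calR$ equips $X$ with a tautological divisorial sheaf $\calO_X(1)$ satisfying $\pi_*\calO_X(n) \cong \calE_n$ for $n$ large, and I define the polarisation by requiring $\omega_X \otimes \calL \cong \calO_X(1)$ as divisorial sheaves. Admissibility (ii) immediately gives that $X$ is normal and $\Q$-Gorenstein; flatness of $\pi$ follows from local freeness of each $\calE_n$ over the smooth curve $S$ together with constancy of the fibre Hilbert polynomial, and projectivity is built into the Proj construction. The K3 structure on a general fibre, the polarisation of degree two, and Assumption \ref{mainass}(iv) all follow directly from the explicit local descriptions of $\calR_s \otimes k(s)$ carried by the $5$-tuple; Assumption \ref{mainass}(iii) at the general point follows from the hyperelliptic case of (iv). To obtain Assumptions \ref{mainass}(i) and (ii), I apply Proposition \ref{Lflat} locally on $S$, cutting out the required flat prime divisor $H_{U_s}$ from a general section of $\calL$ over $\pi^{-1}(U_s)$. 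The identification $\calR = \calR(X,\pi,\calL)$ is then automatic from the definition of $\calL$, and recovery of the input $5$-tuple amounts to unwinding Definition \ref{5tupledefn} with the aid of Lemma \ref{Tprops} and Proposition \ref{branchdiv}.

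The main obstacle I anticipate is verifying Assumption \ref{mainass}(ii) in the converse direction: admissibility (ii) gives canonical singularities on $X$, but this does not automatically imply that the pair $(X,H)$ is canonical for the specific prime divisor $H$ cut out by a section of $\calL$. The delicate region is over $\mathrm{Supp}(\tau)$, where $X$ acquires its worst singularities coming from the double cover of the cones appearing in Definition \ref{Pdefn}, and where $H$ risks passing through the cone vertices. Admissibility condition (i), which forces $B_{\calA}$ to avoid $\calP$, is precisely the ingredient that prevents this pathology: it allows the canonicity of $(X,H)$ to be checked fibrewise using the explicit local equations from Assumption \ref{mainass}(iv), and so should be essential to completing the argument.
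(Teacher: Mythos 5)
Your overall skeleton matches the paper's: the forward direction is indeed Proposition \ref{branchdiv} plus the discrepancy argument (the paper makes your ``general principle'' precise via \cite[Proposition 3.51]{bgav} applied to the irreducible, non-contracted divisor $H$, followed by \cite[Corollary 2.35]{bgav} to drop the boundary), and the converse sets $\calL := \calO_X(1)\otimes\omega_X^{-1}$ and reads off Assumptions (iii) and (iv) from the local complete-intersection description in $\Proj(1,1,1,2,3)\times U$, exactly as you propose.

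The genuine gap is the one you flag yourself and then leave unresolved: verifying Assumption \ref{mainass}(ii) over $\mathrm{Supp}(\tau)$. Your suggestion that admissibility (i) ``prevents this pathology'' cannot mean that $H$ avoids the cone vertices: the points of $X$ lying over $\calP$ are base points of $|\calL\otimes\pi^*\calN^m|$ (sections of $\calO(1)$ are linear in the $x_i$ and all vanish at $(0\!:\!0\!:\!0\!:\!1\!:\!*)$), so \emph{every} member $H$ passes through them, and routing the argument through Proposition \ref{Lflat} does not help since its hypothesis already demands that the local pair be canonical. What is actually needed, and what the paper supplies, is an explicit computation of the singularity of $H$ at such a point: a general section of $\calO_{\mathbf{Proj}_S(\calA)}(1)$ cuts the threefold $\{x_1^2 - x_2(ax_2+bx_3) + t^r y + t\psi = 0\}$ in a surface with an $A_{2r+1}$ rational double point at the vertex, and --- here is where admissibility (i) genuinely enters --- since $B_{\calA}$ misses $\calP$, the double cover $X\to\mathbf{Proj}_S(\calA)$ is ramified precisely at that vertex, so $H$ is the corresponding cyclic double cover and acquires an $A_r$ singularity by \cite[Theorem 5.43]{bgav}. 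Only then does \cite[Theorem 5.34]{bgav} (rational double points on $H$ imply $(X,H)$ canonical) close the argument. Without this computation your proof of the converse is incomplete at exactly the point where the construction is most delicate.
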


\begin{proof} We begin by letting $(X,\pi,\calL)$ be a threefold fibred by K3 surfaces of degree two over $S$ that satisfies Assumptions \ref{mainass}. We want to show that the associated $5$-tuple of $(X,\pi,\calL)$ is admissible. Note that condition (i) in the definition of admissible follows immediately from Proposition \ref{branchdiv}.

It remains to show that $X^c := \mathbf{Proj}_S(\calR)$ has at worst canonical singularities. This will follow from Assumption \ref{mainass}(ii), that the pair $(X,H)$ is canonical. By the proof of Lemma \ref{fingen}, the relative log canonical models of $(X,\pi,\calL)$ and $(X,\pi,\calO_X(H))$ are isomorphic, so it is enough to prove that the relative log canonical model $\hat{X}^c$ of $(X,\pi,\calO_X(H))$ has at worst canonical singularities. 

Let $\hat{\phi}\colon X - \to \hat{X}^c$ denote the natural birational map. By Assumption \ref{mainass}(i) $H$ is irreducible, so no components of $H$ can be contracted by $\hat{\phi}$. Thus, by \cite[Proposition 3.51]{bgav} we see that $\mathrm{discrep}(\hat{X}^{c},\hat{\phi}_+H) \geq \mathrm{discrep}(X,H) \geq 0$, so the log pair $(\hat{X}^{c},\hat{\phi}_+H)$ is canonical. But $\hat{\phi}_+H$ is effective on $\hat{X}^{c}$ so, by \cite[Corollary 2.35]{bgav}, the log pair $(\hat{X}^{c},0)$ is also canonical and $\hat{X}^{c}$ has at worst canonical singularities. This proves condition (ii) in the definition of admissible.

Next we prove the converse statement. Let $\calR$ be a sheaf of $\calO_S$-algebras defined by an admissible $5$-tuple $(\calE_1,\tau,\xi,\calE_3^+,\beta)$. Define $X := \mathbf{Proj}_S(\calR)$ and let $\pi\colon X \to S$ denote the natural projection. As in the proof of Lemma \ref{Rprops}, over an affine open set $U \subset S$ we can view $X$ as a normal subvariety in $\Proj_{(1,1,1,2,3)}[x_1,x_2,x_3,y,z] \times U$ defined by equations
\[ f_2(x_1,x_2,x_3,y;t) = 0, \makebox[3em]{} z^2 = f_6(x_1,x_2,x_3,y;t),\]
where $t$ is a parameter on $U$.

By this local description, it is clear that $X$ does not intersect the singular curve $(0\!:\!0\!:\!0\!:\!0\!:\!1) \times U \subset \Proj{(1,1,1,2,3)} \times U$. Furthermore, the divisor $B_{\calA}$ defining the locus $f_6(x_1,x_2,x_3,y;t)=0$ on $\mathbf{Proj}_S(\calA)$ does not intersect the set $\calP$ by condition (ii) in the definition of admissible, so $X$ does not intersect the singular curve \mbox{$(0\!:\!0\!:\!0\!:\!1\!:\!0) \times U \subset \Proj{(1,1,1,2,3)} \times U$} either.

Thus, we see that $X$ does not intersect the singular locus in $\Proj{(1,1,1,2,3)} \times U$ and that $\omega_X|_{\pi^{-1}(U)}$ is trivial. Therefore $X$ is Gorenstein and over each such open set the sheaf $\calO(1)$ induced on $X$ by the weighted projective space structure is invertible. These invertible sheaves glue to give a invertible sheaf $\calO_{X}(1)$ on $X$.

Next consider the morphism $\pi\colon X \to S$. By construction $\pi$ is flat, projective and surjective. Furthermore, by the local description above, the general fibre of $\pi$ is a complete intersection of type $(2,6)$ in $\Proj{(1,1,1,2,3)}$, which is a K3 surface by adjunction. As $X$ has at worst canonical singularities, by \cite[Lemma 5.17]{bgav} we see that this K3 surface has at worst Du Val singularities. 

Now define $\calL := \calO_X(1) \otimes \omega_X^{-1}$. Then $\calL$ is invertible and the local description above shows that $\calL$ induces an ample invertible sheaf with self-intersection number two on a general fibre of $\pi$. Therefore $(X,\pi,\calL)$ is a threefold fibred by K3 surfaces of degree two.

We next show that Assumptions \ref{mainass} hold for $(X,\pi,\calL)$.

\begin{lemma}\label{asslem}  Define a threefold fibred by K3 surfaces of degree two $(X,\pi,\calL)$ as above. Then $(X,\pi,\calL)$ satisfies Assumptions \ref{mainass}.\end{lemma}

\begin{proof} We begin by proving Assumptions \ref{mainass}(iii) and \ref{mainass}(iv). Let $U \subset S$ be an affine open set. Then as in the proof of Lemma \ref{Rprops}, we can view $\pi^{-1}(U)$ as a complete intersection
\[\pi^{-1}(U) \cong \{ f_2(t) = f_6(t) = 0\} \subset \Proj(1,1,1,2,3) \times U.\]
As $\omega_X|_{\pi^{-1}(U)}$ is trivial, the restriction of $\calL$ to $\pi^{-1}(U)$ is just the sheaf induced from $\calO(1)$ on $\Proj(1,1,1,2,3) \times U$. From this, it easily follows that Assumptions \ref{mainass}(iii) and \ref{mainass}(iv) hold for $(X,\pi,\calL)$.

It remains to prove Assumptions \ref{mainass}(i) and \ref{mainass}(ii). In order to do this we start by defining the divisor $H$. Choose an ample invertible sheaf $\calN$ on $S$. Then for some $m > 0$, the sheaf $\pi_*\calL \otimes \calN^m$ is generated by its global sections. Let $H$ denote a general member of the linear system $|\calL \otimes \pi^* \calN^m|$. 

We wish to show that $H$ is a prime divisor that is flat over $S$, and that $(X,H)$ is canonical. In order to do this, we show that $H$ may be chosen to avoid the worst singularities of $Y$. Specifically, we want to avoid singularities that are not compound Du Val \cite[Definition 5.32]{bgav}.

We start by examining the linear system $|\calL \otimes \pi^* \calN^m|$ in which $H$ moves. Note that as $\pi_*\calL \otimes \calN^m$ is generated by its global sections, for any affine open set $U \subset S$ the sections in $H^0(X, \calL \otimes \pi^* \calN^m)$ generate $H^0(\pi^{-1}(U), \calL \otimes \pi^* \calN^m)$ as an $\calO_{\pi^{-1}(U)}$-module, so we may study this linear system locally over $S$.

So let $U \subset S$ be an affine open set. As $\calN|_U \cong \calO_U$, we have the inverse image $\pi^* \calN^m|_{\pi^{-1}(U)} \cong \calO_{\pi^{-1}(U)}$ and, since $\omega_X|_{\pi^{-1}(U)}$ is trivial, the restriction of $\calL \otimes \pi^* \calN^m$ to $\pi^{-1}(U)$ is just the sheaf induced from $\calO(1)$ on $\Proj(1,1,1,2,3) \times U$. 

The sheaf $\calO(1)$ defines a linear system on $\Proj(1,1,1,2,3) \times U$ that is base point free outside of the locus $(0\!:\!0\!:\!0\!:\!y\!:\!z) \times U$, so the induced linear system on $X$ is base point free outside of its intersection with this locus, consisting of precisely the points over the set $\calP$ defined in \ref{Pdefn}. Thus, as $H^0(\pi^{-1}(U), \calL \otimes \pi^* \calN^m)$ is generated as an $\calO_{\pi^{-1}(U)}$-module by the sections in $H^0(X, \calL \otimes \pi^* \calN^m)$, we see that the linear system $|\calL \otimes \pi^* \calN^m|$ has no base points or fixed components on $X$ outside of the points over the set $\calP$. In particular, we see that the linear system induced by $|H|$ on a general fibre of $\pi$ is base point free.

As $X$ has at worst canonical singularities, by \cite[Corollary 5.40]{bgav} all but finitely many of the singular points of $X$ are compound Du Val. So, apart from the points lying over the set $\calP$, we may assume that the only singularities of $X$ lying on $H$ are compound Du Val. Furthermore, by Bertini's theorem we may assume that $H$ is reduced, irreducible and nonsingular outside of the singular points of $X$ and the points lying over $\calP$. In particular $H$ cannot contain any components of fibres, so is horizontal and thus flat over $S$. This proves that $H$ is flat over $S$ and that $(X,\pi,\calL)$ satisfies Assumption \ref{mainass}(i) (with $\calM = \calN^{-m}$).

The last step in the proof of Lemma \ref{asslem} is to show that, with $H$ chosen as above, the log pair $(X,H)$ is canonical. This will follow from \cite[Theorem 5.34]{bgav} if we can show that all of the singularities in $H$ are rational double points. By the argument above, these singularities arise from compound Du Val points and points lying over $\calP$. At a compound Du Val point, the singularity in $H$ is a rational double point by definition. So it just remains to classify the singularities lying over the points of $\calP$.

By the proof of Lemma \ref{Tprops}, after a change of coordinates locally we can write $\mathbf{Proj}_S(\calA)$ as 
\[\{f_2(t)=0\} \subset \Proj_{(1,1,1,2)}[x_1,x_2,x_3,y] \times U,\]
where 
\[f_2(t) = x_1^2 - x_2(ax_2 + bx_3) + t^r y + t \psi(x_i,t)\]
for some $a,b \in \C$ that are not both zero and $t$ a local parameter on the affine open set $U \subset S$. The weighted projective space structure induces a divisorial sheaf $\calO_{\mathbf{Proj}_S(\calA)}(1)$ locally on $\mathbf{Proj}_S(\calA)$, a general section of which defines a Weil divisor that has a rational double point singularity of type $A_{2r+1}$ at the point $(0\!:\!0\!:\!0\!:\!1\,;0)$. As $B_{\calA}$ does not contain any point of $\calP$, around the point $(0\!:\!0\!:\!0\!:\!1\,;0)$ we have that $X$ is a cyclic double cover of $\mathbf{Proj}_S(\calA)$ ramified over the point $(0\!:\!0\!:\!0\!:\!1\,;0)$. Thus a divisor defined by a general section of $\calO_{X}(1)$ is a cyclic double cover of a divisor defined by a general section of $\calO_{\mathbf{Proj}_S(\calA)}(1)$ ramified over the singularity. Therefore, by \cite[Theorem 5.43]{bgav}, the general section of $\calO_{X}(1)$ has a rational double point singularity of type $A_{r}$.

Thus, we may assume that the only singularities occurring in $H$ are rational double points, so by \cite[Theorem 5.34]{bgav} the log pair $(X,H)$ is canonical. Therefore Assumption \ref{mainass}(ii) holds for $(X,\pi,\calL)$. This completes the proof of Lemma \ref{asslem}. \end{proof}

To complete the proof of Theorem \ref{relmodthm}, we just have to show that $\mathbf{Proj}_S(\calR)$ is the relative log canonical model of $(X,\pi,\calL)$ over $S$ and $(\calE_1,\tau,\xi,\calE_3^+,\beta)$ is its associated $5$-tuple. This will follow if we can show that $\calR$ is the relative log canonical algebra of $(X,\pi,\calL)$.

Note that
\[ \pi_*((\omega_X \otimes \calL)^{[n]}) \cong \pi_*((\omega_X \otimes \calO_X(1) \otimes \omega_X^{-1})^{[n]}) \cong \pi_*(\calO_X(1)^{n})\]
for all $n > 0$. But this implies that the relative log canonical algebra of $(X,\pi,\calL)$ is $\calR$, as required. This completes the proof of Theorem \ref{relmodthm}.\end{proof}

\proof[Acknowledgements] Many of the results in this work are extensions of results that first appeared in my doctoral thesis \cite{mythesis}, completed at the University of Oxford. I would like to thank my doctoral advisor Bal\'{a}zs Szendr\H{o}i for his support and guidance throughout the writing of this paper. I would also like to thank Miles Reid for a helpful conversation in which he directed my attention to the paper \cite{flgi} that inspired many of the results in this work, and Charles Doran for suggesting several of the extensions to the material in \cite{mythesis} that appear here.

\end{document}